\tikzset{
>=stealth',
  punktchain/.style={
    rectangle, 
    rounded corners, 
     fill=cyan!40,
    draw=black, very thick,
    text width=10em, 
    minimum height=2em, 
    text centered, 
    on chain},
  line/.style={draw, thick, <-},
  element/.style={
    tape,
    top color=white,
    bottom color=blue!50!black!60!,
    minimum width=8em,
    draw=blue!40!black!90, very thick,
    text width=10em, 
    minimum height=2.5em, 
    text centered, 
    on chain},
  every join/.style={->, thick,shorten >=1pt},
  decoration={brace},
  tuborg/.style={decorate},
  tubnode/.style={midway, right=2pt},
}
\newtheorem{rem}{Remark}
\newtheorem{prop}{Proposition}
\newtheorem{thm}{Theorem}
\newtheorem{alg}{\small{Algorithm}}
\def\be{\begin{equation}}
\def\ee{\end{equation}}
\def\bea{\begin{eqnarray}}
\def\eea{\end{eqnarray}}
\def\RR{\mathbb R}
\begin{document}

\title{Particle based gPC methods for mean-field models of swarming with uncertainty}
\date{}

\author{Jos\'e Antonio Carrillo \thanks{Department of Mathematics, Imperial College London, SW7 2AZ, UK ({\tt carrillo@imperial.ac.uk})} \and Lorenzo Pareschi \thanks{Department of Mathematics and Computer Science, University of Ferrara, Via Machiavelli 35, 44121 Ferrara, Italy ({\tt lorenzo.pareschi@unife.it}).} \and
Mattia Zanella\thanks{Department of Mathematical Sciences, Politecnico di Torino, Corso Duca degli Abruzzi 24, Torino, Italy ({\tt mattia.zanella@polito.it}).}}

\maketitle
\begin{abstract}
In this work we focus on the construction of numerical schemes for the approximation of stochastic mean-field equations which preserve the nonnegativity of the solution. The method here developed makes use of a mean-field Monte Carlo method in the physical variables combined with a generalized Polynomial Chaos (gPC) expansion in the random space. In contrast to a direct application of stochastic-Galerkin methods, which are highly accurate but lead to the loss of positivity, the proposed schemes are capable to achieve high accuracy in the random space without loosing nonnegativity of the solution. Several applications of the schemes to mean-field models of collective behavior are reported. \\

\textbf{Keywords}: uncertainty quantification, stochastic Galerkin, mean-field equations, swarming dynamics.\\

\textbf{MSC}: 35Q83, 65C05, 65M70. 
\end{abstract}

\section{Introduction}
\label{sec1}
An increasing number real world phenomena have been fruitfully described by kinetic and mean-field models. Particular attention has been paid in the past decade to self-organizing systems in socio-economic and life sciences. Without intending to review the very huge literature on these topics, we refer the reader to \cite{APTZ,BCCD,CFTV,CPT,CristPiccTos02,EHS,MT,PZ1,T} and the references therein. 

Kinetic models may be derived in a rigorous way from microscopic particle dynamics in the limit of a large number of agents \cite{BolCa,CFTV,HT,JW,PT,T}. It is a well known fact that the main disadvantage of the microscopic approach to capture the asymptotic behavior of interacting systems relies on the so-called curse of dimensionality. For example, if we consider $N$ interacting individuals the cost is of order $O(N^2)$, becoming rapidly unaffordable in the case of large systems. For this reason, kinetic and mean-field type modeling have been developed to represent the evolution of distribution functions obtained in the asymptotic regimes, which of course become independent of the size of the system.

The introduction of uncertainty in the mathematical modeling of real world phenomena seems to be unavoidable for applications. In fact we can often have at most statistical information
of the modeling parameters, which must be estimated from experiments or derived from heuristic observations \cite{Ball_etal,BFHM,KTIHC}. Therefore, to produce effective predictions and to better understand physical phenomena, we can incorporate all the ineradicable uncertainty in the dynamics from the beginning of the modeling.

In the following a formal derivation of uncertain mean-field equations for a class of microscopic models for alignment is proposed.  At the numerical level one of the most popular techniques for uncertainty quantification is based on stochastic Galerkin (SG) methods. In particular, generalized polynomial chaos (gPC) gained increasing popularity in UQ, for which spectral convergence on the random field is observed under suitable regularity assumptions \cite{DPZ,HJ,HJX,X,XK,LeMK,ZJ}. Nevertheless, these methods need a strong modification of the original problem and when applied to hyperbolic and kinetic problems lead to the loss of some structural properties, like positivity of the solution, entropy dissipation or hyperbolicity, see \cite{DPL}. 

Beside SG-based methods, non-intrusive approaches for UQ have been developed in recent years like the stochastic collocation (SC) methods \cite{DPZ,NZ,X,XH}. These methods have the nice feature to keep the structural properties of the underlying numerical solver for the deterministic problem.

In this work we focus on the construction of numerical schemes which preserve the positivity of relevant statistical quantities, keeps the high accuracy typical of gPC approximations in the random space and takes advantage of the reduction of computational complexity of Monte Carlo (MC) techniques in the physical space \cite{AlbiPareschi13,Caflisch,DP15,PT}. We investigate these Monte Carlo gPC (MCgPC) methods for mean-field type equations, which permits with a cost of $O(SN)$, $S\ll N$,  to obtain a positive numerical approximation of expected quantities. 

The rest of the paper is organized as follows. In Section 2 we introduce the microscopic models of swarming with random inputs and review some of their main properties and their mean-field limit. Section 3 is devoted to the construction of numerical methods for uncertainty quantification. We first survey some results on gPC expansions and derive the classical stochastic Galerkin scheme for the mean-field problem. Subsequently we describe the new particle based gPC approach. Finally, in Section 4 several numerical results are presented which show the efficiency and accuracy of the new Monte Carlo-gPC approach.  


\section{Microscopic and mean-field models with uncertainty}
\label{sec2}
In the following we introduce some classical microscopic models of collective behavior \cite{CFTV,CS,DOCBC} in the stochastic case characterized by random inputs. In collective motion of groups of animals three zones are distinguished: the first is the repulsion region, where agents try to avoid physical collisions, hence in the immediate proximity they align to the possible direction of the group and, at last, the attraction region, where individuals too far from the group are attracted to the collective center of mass.

The microscopic description of a dynamical systems composed of $N$ individuals is described by a system of ordinary differential equations for $(x_i(\theta,t),v_i(\theta,t))\in \RR^{d_x}\times \RR^{d_v}$, $i=1,\dots,N$ with the general structure
\begin{equation}\begin{cases}\label{eq:system}
\dot{x}_i(\theta,t)=&v_i(\theta,t),\\
\dot{v}_i(\theta,t)=&S(\theta;v_i)+\dfrac{1}{N}\displaystyle\sum_{j=1}^N \Big[H(\theta;x_i,x_j) (v_j(\theta,t)-v_i(\theta,t))\\
&\qquad\qquad+A(\theta;x_i,x_j)+R(\theta;x_i,x_j)\Big],
\end{cases}
\end{equation}
where $x_i(\theta,0)=x_i^0$, $v_i(\theta,0)=v_i^0$ denote the initial positions and velocities of the agents and we introduced the functions depending on the random input $\theta$: $H(\theta;x_i,x_j)$,  representing the alignment process, $A(\theta;x_i,x_j)$ the attraction dynamics and the term $R(\theta;x_i,x_j)$ the short-range repulsion.  In \eqref{eq:system} the function $S(\theta;v_i)$ describes a self-propelling term.  

We will assume through the paper that the stochastic quantity $\theta$ is distributed according to a known probability density $g:\RR\rightarrow\RR^+$, such that $g(\theta)\ge 0$ a.e. in $\RR$, $\textrm{supp}(g)\subseteq I_{\Theta}$ and $\int_{I_{\Theta}}g(\theta)d\theta=1$.

\subsection{Cucker-Smale dynamics with uncertainty}

In the case of flocking dynamics \cite{AH,CFRT,CS} the interaction function depends on the Euclidean distance between two agents, i.e. $H(\theta;x_i,x_j)=H(\theta;|x_i-x_j|)$ is of the form 
\be\label{eq:HCS}
H(\theta;|x_i-x_j|) = \dfrac{K}{(1+|x_i(\theta,t)-x_j(\theta,t)|^2)^{\gamma}},
\ee
with $K,\gamma>0$ which may depend on stochastic inputs. We will refer to a system of agents with trajectories \eqref{eq:system} and interaction function of the form \eqref{eq:HCS} as Cucker-Smale (CS) model. Further, we set in \eqref{eq:system} $A(\cdot;\cdot,\cdot)\equiv 0$ and $R(\cdot;\cdot,\cdot)\equiv 0$.

In the deterministic setting, different regimes are described by the introduced model in relation to the choice of $K,\gamma>0$. 
In particular, the following result has been proven in \cite{CS}, see Theorem 2 p. 855. We also refer the reader to \cite{CFRT,HL,HT} for related results and improvements. 

\begin{thm}
Assume that one of the following conditions holds
\begin{itemize}
\item[i)] $\gamma\leq 1/2$
\item[ii)] $\gamma> 1/2$ and
\be\label{eq:condition_th}
\left[ \left(\dfrac{1}{2\gamma}\right)^{\frac{1}{2\gamma-1}}-\left(\dfrac{1}{2\gamma}\right)^{\frac{2\gamma}{2\gamma-1}} \right] \left(\dfrac{K^2}{8N^2\Lambda(v_0)}\right)^{\frac{1}{2\gamma-1}}> 2\Gamma(x_0)+1,
\ee
where 
$$
\Gamma(x) = \dfrac{1}{2}\sum_{i\ne j}|x_i-x_j|^2,\qquad \Lambda(v)=\dfrac{1}{2} \sum_{i\ne j}|v_i-v_j|^2.
$$
\end{itemize}
Then there exists a constant $B_0$ such that $\Gamma(x(t))\le B_0$ for all $t\in\RR^+$ while $\Lambda(v(t))$ converges toward zero as $t\rightarrow +\infty$, and the vectors $x_i-x_j$ tend to a limit vector $\hat{x}_{ij}$, for all $i,j\le N$.
\end{thm}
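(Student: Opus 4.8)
The plan is to run the classical energy method of Cucker and Smale pathwise in the random parameter $\theta$ (so that $K=K(\theta)$ and $\gamma=\gamma(\theta)$ are just fixed constants for each realisation), using only the two functionals $\Gamma$ and $\Lambda$ from the statement; here the Cucker--Smale regime means \eqref{eq:system}--\eqref{eq:HCS} with $A\equiv R\equiv 0$ and no self-propulsion, $S\equiv 0$. Two facts are immediate: the mean velocity $\bar v=\tfrac1N\sum_i v_i$ is conserved, since the coupling in \eqref{eq:system} is antisymmetric under $i\leftrightarrow j$; and $\Lambda$ is non-increasing, because symmetrising and using $\langle v_i,v_j-v_i\rangle+\langle v_j,v_i-v_j\rangle=-|v_i-v_j|^2$ yields
\be\label{eq:Ldot}
\frac{d}{dt}\Lambda(v(t))=-\sum_{i,j}H(\theta;|x_i-x_j|)\,|v_i-v_j|^2\le 0 .
\ee
In particular $|v_i(t)-\bar v|^2\le \Lambda(v_0)/N$, so the velocities stay bounded, the positions grow at most linearly, and the smooth vector field of \eqref{eq:system} has a global-in-time solution -- no finite-time blow-up has to be excluded.

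Next I would extract the two coupled differential inequalities that drive the argument. Cauchy--Schwarz applied to $\tfrac{d}{dt}\Gamma=\sum_{i\ne j}\langle x_i-x_j,v_i-v_j\rangle$ gives $\big|\tfrac{d}{dt}\Gamma(x(t))\big|\le 2\sqrt{\Gamma(x(t))\,\Lambda(v(t))}$; and since $|x_i-x_j|^2\le 2\Gamma$ for every pair while $r\mapsto K(1+r^2)^{-\gamma}$ is decreasing, \eqref{eq:Ldot} gives $\tfrac{d}{dt}\Lambda\le -2K(1+2\Gamma)^{-\gamma}\Lambda$ (equivalently, a Fiedler-number lower bound for the weighted graph Laplacian $L_x$ in $\dot v=-L_x v$). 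Writing $u:=\sqrt{\Gamma(x(t))}$, $w:=\sqrt{\Lambda(v(t))}$ and $\varphi(r):=K(1+2r^2)^{-\gamma}$, these become $|\dot u|\le w$ and $\dot w\le -\varphi(u)\,w$. The decisive observation is that the functional $\mathcal L(t):=w(t)+\int_0^{u(t)}\varphi(s)\,ds$ is non-increasing, since $\tfrac{d}{dt}\mathcal L=\dot w+\varphi(u)\dot u\le -\varphi(u)w+\varphi(u)|\dot u|\le 0$; hence $\int_0^{u(t)}\varphi\le \mathcal L(0)=\sqrt{\Lambda(v_0)}+\int_0^{\sqrt{\Gamma(x_0)}}\varphi$ for all $t\ge 0$.

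The dichotomy of the statement now reflects whether $\int_0^\infty\varphi$ converges, i.e. whether $2\gamma>1$. If $\gamma\le 1/2$ the integral diverges, so the uniform bound on $\int_0^{u(t)}\varphi$ automatically forces $u(t)\le u_\ast<\infty$ with $u_\ast$ depending only on the data. If $\gamma>1/2$ it converges, and the sufficient requirement is the strict inequality $\sqrt{\Lambda(v_0)}<\int_{\sqrt{\Gamma(x_0)}}^{\infty}\varphi$: setting $\Psi(r):=\int_0^r\varphi$ this says $\Psi(u(t))\le\mathcal L(0)<\Psi(+\infty)$, hence $u(t)\le u_\ast:=\Psi^{-1}(\mathcal L(0))<\infty$. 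In both cases $\Gamma(x(t))\le B_0:=u_\ast^2$ for all $t\in\RR^+$. Finally $\varphi(u(t))\ge\varphi(\sqrt{B_0})=:\delta>0$, so $\tfrac{d}{dt}\Lambda\le -2\delta\Lambda$ gives $\Lambda(v(t))\le\Lambda(v_0)e^{-2\delta t}\to 0$; and from $\big|\tfrac{d}{dt}(x_i-x_j)\big|=|v_i-v_j|\le\sqrt{2\Lambda(v(t))}\le\sqrt{2\Lambda(v_0)}\,e^{-\delta t}\in L^1(\RR^+)$, each $x_i(t)-x_j(t)$ is Cauchy as $t\to\infty$ and converges to a limit vector $\hat x_{ij}$.

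The one genuinely delicate step is turning the clean integral condition $\sqrt{\Lambda(v_0)}<\int_{\sqrt{\Gamma_0}}^\infty K(1+2s^2)^{-\gamma}\,ds$ into the explicit algebraic form \eqref{eq:condition_th}: one has to lower-bound the tail by a multiple of $(2\Gamma_0+1)^{1/2-\gamma}$ via a substitution and an optimisation of a splitting point -- this is exactly the computation that produces the bracketed factor $\big[(1/(2\gamma))^{1/(2\gamma-1)}-(1/(2\gamma))^{2\gamma/(2\gamma-1)}\big]=\max_{0<t<1}(t-t^{2\gamma})$ -- and then to square, keeping track of the correct constant and the correct power of $N$ for the normalisation of \eqref{eq:system}; a soft asymptotic comparison with $\int s^{-2\gamma}\,ds$ would not be sharp enough. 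A harmless technical caveat is that $u=\sqrt{\Gamma}$ is not differentiable where $\Gamma=0$ (all agents momentarily collapsed onto one point), a degenerate configuration that is treated trivially and separately.
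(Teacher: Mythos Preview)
The paper does not supply its own proof of this theorem: it is quoted verbatim from Cucker--Smale (Theorem~2 in \cite{CS}), with \cite{CFRT,HL,HT} cited for related arguments. So there is no in-paper proof to compare against.

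Your argument is correct and is essentially the Ha--Liu / Carrillo--Fornasier--Rosado--Toscani energy method: the Lyapunov functional $\mathcal L=w+\int_0^u\varphi$ is precisely the device of \cite{HL,CFRT}, and the dichotomy on whether $\int_0^\infty\varphi$ diverges is exactly how those references separate unconditional from conditional flocking. The original proof in \cite{CS} proceeds somewhat differently, controlling $\Lambda$ through the Fiedler number of the time-dependent graph Laplacian and closing a self-bounding scalar inequality for $\Lambda$ alone; your route is more elementary and sidesteps the spectral machinery, at the cost of first producing an integral sufficient condition that must then be converted into the explicit form \eqref{eq:condition_th}. You rightly flag that conversion --- recovering the bracketed factor $\max_{0<t<1}(t-t^{2\gamma})$ and the scaling $K^2/(8N^2\Lambda(v_0))$ --- as the only step needing careful bookkeeping; note in particular that the $N^2$ appears because the system \eqref{eq:system} in this paper carries a $1/N$ in front of the interaction sum, which is absent from Cucker--Smale's original formulation, so matching their constant amounts to replacing $K$ by $K/N$.
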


Therefore in the case $\gamma\le 1/2$ we will refer to unconditional alignment (or flocking) given that the velocities alignment does not depend on initial configuration of the system or on dimensionality. In this case all the agents of the population have the same velocity, they form a group with fixed mutual distances with a spatial profile which depends on the initial condition. If $\gamma>1/2$ flocking may be expected under the condition of equation \eqref{eq:condition_th}. 

 \begin{prop}\label{prop1}
Let us consider the evolution of the stochastic CS model \eqref{eq:system} with interaction function of the form \eqref{eq:HCS} and $K=K(\theta)$, i.e.  for a deterministic $\gamma\le 1/2$
$$
\begin{cases}
 \dot { x}_i(\theta,t) =  v_i(\theta,t), \qquad i = 1,\dots,N\\
 \dot { v}_i(\theta,t) = \dfrac{1}{N}\displaystyle\sum_{j=1}^N \dfrac{K(\theta)}{(1+|x_i(\theta,t)-x_j(\theta,t)|^2)^{\gamma}} (v_j(\theta,t)-v_i(\theta,t)),
 \end{cases}
$$
 subject to deterministic initial conditions $x_i(\theta,0)=x_i^0$, $v_i(\theta,0)=v_i^0$ for all $i=1,\dots,N$. 
 The support of the velocities exponentially collapse for large time for each $\theta\in I_{\Theta}$ provided $K(\theta)>0$. 
\end{prop}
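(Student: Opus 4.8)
The plan is to exploit the fact that, for a \emph{frozen} value of $\theta\in I_\Theta$ with $K(\theta)>0$, the system in Proposition~\ref{prop1} is nothing but a deterministic Cucker--Smale system with coupling constant $K(\theta)$ and exponent $\gamma\le 1/2$, so that Theorem~1, case i), applies verbatim. I would proceed in two steps: first use Theorem~1 to turn the uniform-in-time bound on the mutual positions into a strictly positive lower bound for the alignment weight, and then close a Gr\"onwall inequality for the velocity spread.

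First I would fix $\theta$ with $K(\theta)>0$ and invoke Theorem~1 to obtain a constant $B_0=B_0(\theta)$, depending also on the (deterministic) initial data, with $\Gamma(x(\theta,t))\le B_0$ for all $t\ge 0$; hence $|x_i(\theta,t)-x_j(\theta,t)|^2\le 2B_0$ for all $i,j$ and all $t$, and inserting this in \eqref{eq:HCS},
\be\label{eq:Hlb}
H(\theta;|x_i(\theta,t)-x_j(\theta,t)|)\ \ge\ \frac{K(\theta)}{(1+2B_0)^{\gamma}}\ =:\ c_\theta\ >\ 0,\qquad \forall\,i,j,\ \ \forall\,t\ge 0.
\ee
Next, summing the velocity equation over $i$ and using $H(\theta;|x_i-x_j|)=H(\theta;|x_j-x_i|)$ shows that the mean velocity $\bar v(\theta,t)=\frac1N\sum_i v_i(\theta,t)$ is conserved; writing $w_i=v_i-\bar v$ (so $\sum_i w_i=0$) and $E(t)=\sum_i|w_i(\theta,t)|^2$, which equals $\Lambda(v(\theta,t))/N$ in the notation of Theorem~1, a differentiation followed by symmetrization in $i\leftrightarrow j$ gives
\be\label{eq:en}
\frac{d}{dt}E(t)\ =\ -\frac1N\sum_{i,j=1}^N H(\theta;|x_i-x_j|)\,|w_i-w_j|^2\ \le\ -\frac{c_\theta}{N}\sum_{i,j=1}^N|w_i-w_j|^2\ =\ -2\,c_\theta\,E(t),
\ee
where I used \eqref{eq:Hlb} together with $\sum_{i,j}|w_i-w_j|^2=2N\sum_i|w_i|^2$ (valid because $\sum_i w_i=0$). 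Gr\"onwall's lemma then yields $E(t)\le E(0)\,e^{-2c_\theta t}$, and since $v_i-v_j=w_i-w_j$ gives $\max_{i,j}|v_i(\theta,t)-v_j(\theta,t)|^2\le\sum_{i,j}|w_i-w_j|^2=2N E(t)$, the diameter of the velocity configuration — i.e.\ the support of the velocities — collapses exponentially with rate $c_\theta$.

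The hard part is really the uniform-in-time control of the positions used above: a priori the weight $H$ could degenerate as $t\to\infty$, and it is exactly Theorem~1 — whose proof hinges on the divergence of $\int_0^\infty(1+r^2)^{-\gamma}\,dr$ when $\gamma\le 1/2$ — that rules this out; once that bound is in hand the rest is a one-line energy estimate. I would also point out that $B_0$, hence the rate $c_\theta$, depends on $K(\theta)$ and so varies with $\theta$, which is consistent with the pointwise-in-$\theta$ formulation of the statement; a decay rate uniform in $\theta$ would instead require $\inf_{\theta\in I_\Theta}K(\theta)>0$.
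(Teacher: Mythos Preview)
Your argument is correct: freezing $\theta$ reduces the system to a deterministic Cucker--Smale model, Theorem~1(i) supplies the uniform position bound, and the symmetrized energy identity plus Gr\"onwall then gives the exponential rate $c_\theta$. The paper itself omits the proof entirely, simply pointing to Proposition~5 of \cite{CFRT} and to \cite{HL,MT}; your write-up is precisely the standard $L^2$ energy argument found in that literature, so you are not taking a different route but rather filling in the details the authors chose to suppress. One minor remark: some of those references set up the position and velocity bounds \emph{simultaneously} via a coupled system of differential inequalities (avoiding the two-step use of Theorem~1 followed by an independent Gr\"onwall), but your version is equally valid and arguably more transparent, since it cleanly separates the ``hard'' step (non-degeneracy of the weight via the divergence of $\int_0^\infty(1+r^2)^{-\gamma}dr$) from the elementary dissipation estimate.
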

\begin{proof}
We omit the proof that is reminiscent of well established results, see for example Proposition 5 p. 231 of \cite{CFRT}. Similar results have been also obtained in \cite{HL,MT}. 
\end{proof}

In the case $K=K(\theta)$ and $\gamma=\gamma(\theta)>0$ we can study the behavior of the system in a neighbor of the deterministic value $\gamma_0\le 1/2$ for which unconditional alignment does emerge provided $K>0$.  We can prove the following result.
\begin{prop}
In a neighbor of $\gamma_0\le 1/2$ a linearization of the uncertain CS model with $K(\theta)>0$, $\gamma(\theta)>0$ reads
\be\label{eq:CS_lin}
\begin{cases}
\dot x_i(\theta,t) &= v_i, \qquad i=1,\dots,N \\
\dot v_i(\theta,t) &= \dfrac{1}{N}\displaystyle\sum_{j=1}^N \dfrac{K(\theta)}{(1+|x_i-x_j|^2)^{\gamma_0}}\left(1-(\gamma(\theta)-\gamma_0)\log(1+|x_i-x_j|^2)\right)(v_j-v_i),
\end{cases}
\ee
for which unconditional flocking of the velocities follows if 
\be\label{eq:gamma_lin}
\gamma(\theta)< \gamma_0.
\ee
\end{prop}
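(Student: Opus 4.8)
The statement bundles two claims — the precise form of the linearized system \eqref{eq:CS_lin}, and unconditional flocking for it under \eqref{eq:gamma_lin} — and I would treat them separately.

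For the linearization I would simply Taylor expand the weight \eqref{eq:HCS} in the exponent $\gamma$ about $\gamma_0$. Writing $a_{ij} = 1 + |x_i - x_j|^2 \ge 1$, one has $a_{ij}^{-\gamma(\theta)} = a_{ij}^{-\gamma_0}\,e^{-(\gamma(\theta)-\gamma_0)\log a_{ij}}$, and expanding $e^{-(\gamma(\theta)-\gamma_0)\log a_{ij}} = 1 - (\gamma(\theta)-\gamma_0)\log a_{ij} + O\big((\gamma(\theta)-\gamma_0)^2\big)$, then dropping the quadratic remainder, reproduces exactly the coefficient appearing in \eqref{eq:CS_lin}. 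This part is routine bookkeeping.

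For the flocking claim I would introduce the effective communication weight $\psi_\theta(r) = K(\theta)(1+r^2)^{-\gamma_0}\big(1-(\gamma(\theta)-\gamma_0)\log(1+r^2)\big)$ and compare it with $\psi^0_\theta(r) = K(\theta)(1+r^2)^{-\gamma_0}$. The observation that drives everything is that under \eqref{eq:gamma_lin} the bracket is $\ge 1$ for every $r\ge 0$, so $\psi_\theta(r)\ge \psi^0_\theta(r)>0$; moreover $\psi^0_\theta$ is nonincreasing and, because $\gamma_0\le 1/2$, $\int_0^{+\infty}\psi^0_\theta(r)\,dr = +\infty$ (the integrand behaves like $r^{-2\gamma_0}$ with $2\gamma_0\le 1$), which is precisely the non-integrability condition underlying Proposition \ref{prop1}. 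I would then rerun the classical Cucker-Smale estimates on \eqref{eq:CS_lin} for each fixed $\theta\in I_\Theta$: symmetry of $\psi_\theta$ in $(i,j)$ makes the mean velocity $\bar v = \tfrac{1}{N}\sum_i v_i$ conserved, and for $d_X(t)=\max_{i,j}|x_i-x_j|$, $d_V(t)=\max_{i,j}|v_i-v_j|$ one gets, at points of differentiability, $\big|\tfrac{d}{dt}d_X\big|\le d_V$ and $\tfrac{d}{dt}d_V\le -\psi_\theta(d_X)\,d_V\le -\psi^0_\theta(d_X)\,d_V$, using $\psi_\theta(|x_i-x_k|)\ge \psi^0_\theta(|x_i-x_k|)\ge \psi^0_\theta(d_X)$. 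Hence the functional $\mathcal E(t)=d_V(t)+\int_0^{d_X(t)}\psi^0_\theta(s)\,ds$ is nonincreasing; this forces $d_X(t)\le D_\theta<+\infty$ for all $t$ (else the integral would diverge), and then $\tfrac{d}{dt}d_V\le -\psi^0_\theta(D_\theta)\,d_V$ gives the claimed exponential collapse of the velocity spread. Global existence on $\RR^+$ is not an issue, since $d_V$ nonincreasing keeps the $|v_i|$ bounded and the $|x_i|$ at most linearly growing.

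The hard part, exactly as in the deterministic theory \cite{CS,CFRT,HL}, is making the differential inequalities rigorous: $d_X$ and $d_V$ are only Lipschitz, so one must argue at differentiability points and exploit that at a maximizing pair $(i,j)$ one has $(v_i-v_j)\cdot(v_k-v_i)\le 0$ and $(v_i-v_j)\cdot(v_k-v_j)\ge 0$ for every $k$. This is precisely where nonnegativity of $\psi_\theta$ is used, and it explains why \eqref{eq:gamma_lin} cannot be relaxed: for $\gamma(\theta)>\gamma_0$ the weight $\psi_\theta(r)$ turns negative at large $r$, the comparison with $\psi^0_\theta$ fails, and long-range interactions become de-aligning.
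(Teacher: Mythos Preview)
Your proposal is correct. The linearization step is exactly the paper's: a first-order Taylor expansion of $H(\theta;\cdot)$ in the exponent $\gamma$ about $\gamma_0$, yielding \eqref{eq:CS_lin}.

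Where you and the paper differ is in the flocking claim. The paper's own proof is a one-liner: it simply observes that imposing positivity of the linearized communication weight for all distances forces \eqref{eq:gamma_lin}, and then asserts that unconditional flocking follows. You go further and actually supply the mechanism: the comparison $\psi_\theta(r)\ge \psi^0_\theta(r)=K(\theta)(1+r^2)^{-\gamma_0}$, together with the monotonicity and non-integrability of $\psi^0_\theta$ at infinity for $\gamma_0\le 1/2$, lets you rerun the Ha--Liu dissipativity argument on $d_X,\,d_V$ despite $\psi_\theta$ itself not being monotone. This extra step is not cosmetic --- positivity of a kernel alone does not imply unconditional flocking, one needs the fat-tail (non-integrability) condition, and your lower bound by $\psi^0_\theta$ is precisely what recovers it. So your route is the same in spirit (positivity of the linearized weight under \eqref{eq:gamma_lin}) but strictly more complete than the paper's, and your remark that \eqref{eq:gamma_lin} cannot be relaxed because the weight turns negative at long range when $\gamma(\theta)>\gamma_0$ is a useful addition the paper does not make explicit.
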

\begin{proof}
We can linearize the interaction function $H(\theta;\,\cdot)$ in a neighbor of $\gamma_0\le 1/2$ as follows
\[
H(\theta;|x_i-x_j|) = \dfrac{K(\theta)}{(1+|x_i-x_j|^2)^{\gamma_0}}+\dfrac{\partial \bar H}{\partial \gamma}(\theta;|x_i-x_j|)(\gamma(\theta)-\gamma_0),
\]
being $\bar H = K(\theta)/(1+|x_i-x_j|^2)^{\bar \gamma}$ and where $\bar \gamma = \lambda\gamma_0+(1-\lambda)\gamma(\theta)$, $\lambda\in[0,1]$. Hence, it is seen that the linearized system assumes the form \eqref{eq:CS_lin} for which we impose the positivity of the strength of interaction, condition that leads to \eqref{eq:gamma_lin}. 
\end{proof}

\subsection{Other swarming models with uncertainty}\label{sec:other_mod}
The microscopic model introduced by D'Orsogna, Bertozzi et al. in \cite{DOCBC} describes dynamics which self-propulsion, attraction and repulsion zones. The model is given as follows
\be\label{eq:DOB}
\begin{cases}
\dot x_i(\theta,t) = v_i(\theta,t),\\
\dot v_i(\theta,t) = (a-b|v_i(\theta,t)|^2)v_i(\theta,t) - \dfrac{1}{N} \displaystyle\sum_{j\ne i}\nabla_{x_i}U(\theta;|x_j(\theta,t)-x_i(\theta,t)|)
\end{cases}
\ee
for all $i=1,\dots,N$. In the system of differential equations \eqref{eq:DOB} the quantities $a$, $b$ are nonnegative parameters representing the self-propulsion of individuals and a friction term following Rayligh's law respectively. Further, $U:\RR^{2d}\times I_{\Theta} \rightarrow \RR$ is an uncertain potential modeling short-range range repulsion and long-range attraction. A typical choice for the potential $U$ is a Morse potential of the form 
\be \label{eq:U_Morse}
U(\theta;r) = -C_A(\theta)e^{-r/\ell_A} + C_R(\theta)e^{-r/\ell_R},
\ee
where $C_A(\theta)$, $C_R(\theta)$, $\ell_A$, $\ell_B$ are the uncertain strength and length of attraction/repulsion respectively. In collective behavior description of interest is the case $C(\theta):=C_R/C_A>1$ and $\ell:=\ell_R/\ell_A<1$ corresponding to long-range attraction and short-range repulsion. It is well-known that several equilibria may be described through this system: the first of stability for $C(\theta)\ell^{2d}>1$ for all $\theta\in I_{\Theta}$, with agents forming a crystalline pattern, whereas if $C(\theta)\ell^{2d}<1$ the agents tend to a single or double mills of constant speed $v=\sqrt{a/b}$, see \cite{CFTV}. We may similarly consider the case where uncertainties are present also in the self-propelling term or in the characteristic lengths of attraction/repulsion as well, anyway we will limit to the one described here.

\begin{rem}
As a modification of the classical CS model recent literature considered the case of non-symmetric interactions \cite{MT}. The authors considered the case where alignment is based on a relative influence between agents, its version with uncertainty reads as follows for all $i=1,\dots,N$
$$
\begin{cases}
\dot x_i(\theta,t) = v_i(\theta,t), \\
\dot v_i(\theta,t) = \dfrac{1}{N}\displaystyle\sum_{j=1}^N h(\theta;x_i,x_j)(v_j(\theta,t)-v_i(\theta,t)),
\end{cases}
$$
where $h(\theta;\cdot,\cdot)$ express an uncertain relative alignment strength between the agents $i,j$ and is defined as
$$
h(\theta;x_i,x_j) = \dfrac{H(\theta;|x_i-x_j|)}{\tilde{H}(\theta;x_i)},\qquad \tilde{H}(\theta;x_i) = \dfrac{1}{N} \sum_{k=1}^N H(\theta;|x_i-x_k|),
$$
and $H(\theta;\cdot)$ is given by \eqref{eq:HCS}. The introduced model prescribes weighted interactions between the agents of the system, this symmetry breaking of the original CS dynamics links this problem to more sophisticated modeling, for example leader-follower models and limited perception models as well. We refer the reader to \cite{CFTV,Shen08} for more details and further references. 
\end{rem}
\subsection{Mean-field limit}\label{sec:MF}
In the case of very large number of interacting individuals the numerical solution of the coupled system of ODEs poses serious problems due to the curse of dimensionality. For this reason the description of the interacting systems at different scales \cite{AlbiPareschi13,CFTV,HT} is of primary importance. Therefore, we tackle the problem by considering the distribution function of particles dependent on the stochastic variable $\theta\in I_{\Theta}$ $f(\theta,x,v,t)\ge 0$ with position $x\in\RR^{d_x}$, $v\in\RR^{d_x}$ at time $t\ge 0$. The evolution of $f$ is then derived from the microscopic dynamics via asymptotic techniques.

To obtain the mean-field formulation of the CS dynamics with stochastic interactions we can follow the usual approaches for all $\theta\in I_{\Theta}$, see  \cite{CFTV,CHL,HT}. Let us consider the system of ODEs \eqref{eq:system}, a possible way to derive the corresponding evolution for $f\ge 0$ relies in BBGKY hierarchy \cite{CFTV,CHL,HT,MT}.
Let us define the $N-$particle density function 
\[
f^{(N)} = f^{(N)}(\theta,x_1,v_1,\dots,x_N,v_N,t),
\]
whose evolution, thanks to the mass conservation, is described in the terms of the Liouville equation
\be\label{eq:Liu}
\partial_t f^{(N)} + \sum_{i=1}^N v_i \cdot \nabla_{x_i}f^{(N)}= -\dfrac{1}{N} \sum_{i=1}^N \nabla_{v_i} \cdot \left( \sum_{j=1}^N H_{ij}(\theta)(v_j-v_i)f^{(N)} \right),
\ee
where we indicated with $H_{ij}(\theta)=H(\theta;x_i-x_j)$. Further, we define the marginal distribution 
\[\begin{split}
&f^{(1)}(\theta,x_1,v_1,t)=
\int_{\RR^{d_v(N-1)}}\int_{\RR^{d_x(N-1)}}f^{(N)}(\theta,x_1,v_1,x_{2,\dots,N},v_{2,\dots,N},t)dx_{2,\dots,N}dv_{2,\dots,N},
\end{split}\]
where 
\[
(x_{2,\dots,N},v_{2,\dots,N}) = (x_2,v_2,\dots,x_N,v_N).
\]
By direct integration of \eqref{eq:Liu} against $dx_{2,\dots,N},dv_{2,\dots,N}$ we have that  the transport term corresponds to
\[\begin{split}
\int_{\RR^{d_v(N-1)}}\int_{\RR^{d_x(N-1)}} \sum_{i=1}^N v_i\cdot  \nabla_{x_i}f^{(N)} dx_{2,\dots,N}&dv_{2,\dots,N} = v_1\cdot \nabla_{x_1}f^{(1)}(\theta,x_1,v_1,t).
\end{split}\]
For what it may concern the last term of \eqref{eq:Liu}, thanks to the interchangeability of the particles we have
\[\begin{split}
&\dfrac{1}{N} \sum_{i=1}^N  \int_{\RR^{d_v(N-1)}}\int_{\RR^{d_x(N-1)}}\sum_{j=1}^N \nabla_{v_i}H_{ij}(\theta)(v_j-v_i)f^{(N)} dx_{2,\dots,N}dv_{2,\dots,N}=\\
& \qquad \qquad \dfrac{1}{N}  \int_{\RR^{d_v(N-1)}}\int_{\RR^{d_x(N-1)}}\sum_{j=2}^N \nabla_{v_1}H_{1j}(\theta)(v_j-v_1)f^{(N)} dx_{2,\dots,N}dv_{2,\dots,N}.
\end{split}\]
By taking a closer look to this term we can observe how, thanks to the symmetry of the problem for all $2\le j,k\le N$, $j\ne k$ we have
\[\begin{split}
 &\int_{\RR^{d_v(N-1)}}\int_{\RR^{d_x(N-1)}} H_{1j}(\theta)(v_j-v_1)f^{(N)} dx_{2,\dots,N}dv_{2,\dots,N}=\\
& \qquad \qquad  \int_{\RR^{d_v(N-1)}}\int_{\RR^{d_x(N-1)}} H_{1k}(\theta)(v_k-v_1)f^{(N)} dx_{2,\dots,N}dv_{2,\dots,N},
\end{split}\]
and we obtain 
\be\begin{split}\label{eq:Liu_last2}
&\dfrac{1}{N} \sum_{i=1}^N  \int_{\RR^{d_v(N-1)}}\int_{\RR^{d_x(N-1)}}\sum_{j=1}^N \nabla_{v_i}H_{ij}(\theta)(v_j-v_i)f^{(N)} dx_{2,\dots,N}dv_{2,\dots,N}=\\
&\qquad \qquad \qquad\dfrac{N-1}{N}  \int_{\RR^{d_v(N-1)}}\int_{\RR^{d_x(N-1)}}  H_{12}(\theta)(v_2-v_1)f^{(N)} dx_{2,\dots,N}dv_{2,\dots,N}.
\end{split}\ee
Similarly to $f^{(1)}(\theta,x_1,v_1,t)$, let us define then the marginal density
\[
f^{(2)}(\theta,x_1,v_1,x_2,v_2,t) =  \int_{\RR^{d_v(N-2)}}\int_{\RR^{d_x(N-2)}}f^{(N)}dx_{3,\dots,N}dv_{3,\dots,N}.
\]
We can then reformulate \eqref{eq:Liu_last2} as 
\[
\dfrac{N-1}{N} \nabla_{v_1}\int_{\RR^{d_v}}\int_{\RR^{d_x}}H_{12}(\theta)(v_2-v_1)f^{(2)}dx_2dv_2. 
\]
Finally, the integration of \eqref{eq:Liu} against $dx_{2,\dots,N},dv_{2,\dots,N}$ gives
\be\begin{split}\label{eq:Liu3}
&\partial_t f^{(1)}(\theta,x_1,v_1,t) + v_1\cdot \nabla_{x_1} f^{(1)} = -\dfrac{N-1}{N} \int_{\RR^{d_v}}\int_{\RR^{d_x}}H_{12}(\theta)(v_2-v_1)f^{(2)}dx_2dv_2.
\end{split}\ee
Now, we define 
\[\begin{split}
&f(\theta,x_1,v_1,t) = \lim_{N\rightarrow +\infty}f^{(1)}(\theta,x_1,v_1,t),\\
&\tilde f(\theta,x_1,v_1,x_2,v_2,t) =  \lim_{N\rightarrow +\infty}f^{(2)}(\theta,x_1,v_1,x_2,v_2,t),  
\end{split}\]
and we make the ansatz for the propagation of chaos
\[
\tilde f(\theta,x_1,v_1,x_2,v_2,t) = f(\theta,x_1,v_1,t)f(\theta,x_2,v_2,t).
\]
Finally, from \eqref{eq:Liu3} we have
\be\label{eq:MF2}
\partial_t f(\theta,x,v,t) + v\cdot \nabla_x f(\theta,x,v,t) = \nabla_v\cdot \left[ \mathcal H[f](\theta,x,v,t)f(\theta,x,v,t) \right],
\ee
where
\be\label{eq:H_MF}
\mathcal H[f](\theta,x,v,t) = \int_{\RR^{d_v}}\int_{\RR^{d_x}} H(\theta;x,y)(v-w)f(\theta,y,w,t)dwdy,
\ee
where $H(\theta;x,y)=H(\theta;|x-y|)$ has been defined in \eqref{eq:HCS}. 

Alternative formal derivations require to assume that the set of particles remains in a given compact domain. Once this condition is met, thanks to the Prohorov's theorem, we can prove the convergence of the associated empirical distribution density $f^N$, up to extraction of a subsequence, to a continuous probability density, see \cite{CFTV} Section 3.2 for details. \\
 \\


\section{Monte Carlo gPC methods}
In this section we introduce the Stochastic Galerkin (SG) numerical method for a general differential problem. In particular we will discuss numerical methods belonging to the class of generalized polynomial chaos (gPC). Without intending to review all the pertinent literature we indicate the following references for an introduction \cite{JXZ,LeMK,X,XK}. In our schemes, Monte Carlo (MC) methods will be employed for the approximation of the distribution function $f(\theta,x,v,t)$ in the phase space whereas the random space at the particles level is approximated through SG-gPC techniques.

 For the sake of clarity, we recall first some basic notions on gPC approximation techniques and SG methods applied directly to the distribution function $f(\theta,x,v,t)$ and the corresponding mean-field system.
\subsection{Preliminaries on gPC techniques}
Let $(\Omega,\mathcal{F},P)$ be a probability space and let us define a random variable 
\begin{equation*}
\theta:(\Omega,\mathcal{F})\rightarrow (I_{\Theta},\mathcal{B}_{\RR}),
\end{equation*} 
with $I_{\Theta}\subseteq \RR$ and $\mathcal{B}_{\RR}$ the Borel set. Let us take into account moreover the space time domains $S\subseteq \RR^{d_x}\times \RR^{d_v},d\ge1$ and $[0,T]\in\RR^+$ respectively. In this short introduction we focus real-valued functions depending on a single random input of the form $f(\theta,x,v,t):\Omega \times S\times  [0,T]\rightarrow\RR^d$ with 
$
f(\cdot,x,v,t)\in L^2(\Omega,\mathcal{F},P)  \mbox{ for all } (x,v,t)\in S \times [0,T]\,.
$
We consider now the linear space $\mathbb{P}^M$ generated by orthogonal polynomials of $\theta$ with degree up to $M$: $\{\Phi_h(\theta)\}_{h=0}^M$. They form an orthogonal basis of $L^2(\Omega,\mathcal{F},P)$
\begin{equation*}
\int_{I_{\Theta}}\Phi_h(\theta)\Phi_k(\theta)dg(\theta)=\int_{I_{\Theta}}\Phi_h^2(\theta)dg(\theta)\delta_{hk}
\end{equation*}
where $\delta_{hk}$ is the Kronecker delta function and $g(\theta)$ is the probability distribution function of the random variable $\theta\in I_{\Theta}$. Let us assume that $g(\theta)$ has finite second order moment, then the polynomial chaos expansion of $f(\cdot,\cdot,\cdot)$ is defined as follows
\begin{equation*}
f(\theta,x,v,t)=\sum_{m\in\mathbb{N}}\hat{f}_m(x,v,t)\Phi_m(\theta),
\end{equation*}
where $\hat{f}_m(x,v,t)$ is the Galerkin projection of $f(\theta,x,v,t)$ into the polynomial space $\mathbb{P}^m$
\begin{equation}\label{eq:f_pro}
\hat{f}_m(x,v,t)=\int_{I_{\Theta}}f(\theta,x,v,t)\Phi_m(\theta)dg(\theta),\qquad m\in\mathbb{N}.
\end{equation}

We exemplify the effective numerical method on a general nonlinear initial value problem
\begin{equation}\label{eq:differential_problem}
\partial_t f(\theta,x,v,t)=\mathcal{J}[f](\theta,x,v,t)
\end{equation}
with $f(\theta,x,v,t)$ solution of the introduced differential model and $\mathcal J[\cdot]$ a differential operator. Here the random variable $\theta$ acts as a perturbation of $\mathcal{J}[\cdot]$, of the solution or propagates from uncertain initial conditions. 

\begin{table}[t]
\begin{center}
\begin{tabular}{c | c |  c}\label{tab:Ask}
Probability law of $\theta$ & Expansion polynomials & Support \\ \hline\hline
Gaussian & Hermite    & $(-\infty,+\infty)$ \\
Uniform    & Legendre & $[a,b]$\\
Beta         & Jacobi     & $[a,b]$\\
Gamma    & Laguerre & $[0,+\infty)$\\
Poisson    & Charlier   & $\mathbb{N}$\\
\end{tabular}
\end{center}
\caption{The different gPC choices for the polynomial expansions}
\label{tab:pol}
\end{table}

The generalized polynomial chaos method approximates the solution $f(\theta,x,v,t)$ of \eqref{eq:differential_problem} with its $M$th order truncation $f^M(\theta,x,v,t)$ and considers the Galerkin projections of problem for each $h=0,\dots,M$
\begin{equation*}
\partial_t \int_{I_{\Theta}} f(\theta,x,v,t)\cdot \Phi_h(\theta)dg(\theta)=\int_{I_{\Theta}}\mathcal{J}[f](\theta,x,v,t)\cdot \Phi_h(\theta)dg(\theta)
\end{equation*} 
Thanks to the Galerkin orthogonality we typically obtain a system of $M+1$ deterministic coupled equations  
\begin{equation}\label{eq:system_det}
\hat{f}_h(x,v,t) = \hat{\mathcal J}_h(\{\hat{f}_k\}_{k=0}^M)(x,v,t).
\end{equation}
The related deterministic coupled subproblem can be solved through suitable numerical techniques. The approximation of the statistical quantities of interest are defined in terms of the introduced projections. From \eqref{eq:f_pro} we have
\be\label{eq:mean_teo}
\mathbb E[f(\theta,x,v,t)] \approx \hat f_0(x,v,t),
\ee
and its evolution is approximated by \eqref{eq:system_det}. Thanks to the orthogonality of the polynomial basis it is possible to show that 
\be\label{eq:var_teo}
\begin{split}
Var[f(\theta,x,v,t)] &\approx \mathbb E\left[ \left(\sum_{h=0}^M \hat f_h(x,v,t) \Phi_h(\theta)-\hat f_0(x,v,t)\right)^2 \right]\\
&= \sum_{h=0}^M \hat f_h(x,v,t)\mathbb E[\Phi_h^2(\theta)]-\hat f_0^2(x,v,t).
\end{split}
\ee

One of the most important advantages of the gPC-SG type methods is their exponential convergence with respect to the stochastic quantity to the exact solution of the problem, unlike usual sampling techniques for which the order is $\mathcal O(1/\sqrt{M})$ where $M$ is the number of samples. Despite this property, numerical solution of gPC-SG systems are costly in the case of nonlinear problems, given the absence of parallelization, and for this reason requires a clear effort in order to design efficient codes \cite{LeMK,X}.

\subsection{Stochastic Galerkin methods for the mean-field system}
Let us consider the stochastic mean-field equation \eqref{eq:MF2} with nonlocal drift $\mathcal H[\cdot]$ of the form \eqref{eq:H_MF}. The gPC approximation for this problem is given by the following system of differential equations
 \be\label{eq:CSgPC}
 \partial_t \hat f_h(x,v,t) + v\cdot \nabla_x \hat f_h(x,v,t) = \nabla_v \cdot \Big[\sum_{k=0}^M \mathcal H_{hk}[\hat f](x,v,t)\hat f_k(x,v,t) \Big],
 \ee
 with 
 \be\label{eq:HgPC}
 \mathcal H_{hk}[\hat f] = \dfrac{1}{\|\Phi_h \|_{L^2}^2}\sum_{m=0}^M\int_{I_{\Theta}}\mathcal H[\hat f_m]\Phi_k(\theta)\Phi_m(\theta)\Phi_h(\theta)dg(\theta).
 \ee
 The system of differential equations \eqref{eq:CSgPC} may be written in vector notations as follows
 \[
 \partial_t \hat{\mathbf{f}}(x,v,t) + v\cdot \nabla_x \hat{\mathbf{f}}(x,v,t) = \nabla_v \cdot \Big[ \mathbf{H}[\hat{\mathbf{f}}](x,v,t)\hat{\mathbf{f}}(x,v,t) \Big],
 \]
 where $\hat{\mathbf{f}}= (\hat{f}_0,\dots,\hat{f}_M)^T$ and the components of the $(M+1)\times(M+1)$ matrix $\mathbf{H}[\hat{\mathbf{f}}]$ are given by \eqref{eq:HgPC}. We define the total mass and the mean velocity as the quantities:
 \[
 \rho = \int_{\RR^{d_v}\times \RR^{d_x}} f^M(\theta,x,v,t)dxdv\qquad \mbox{and} \qquad
 V(\theta,t) =\dfrac{1}{\rho} \int_{\RR^{d_v}\times \RR^{d_x}}vf^M(\theta,x,v,t)dxdv\,,
 \]
with
$$
f^M(\theta,x,v,t)=\sum_{m=0}^M \hat{f}_m(x,v,t)\Phi_m(\theta)\,.
$$
 
\begin{prop}
The total mass $\rho$ does not depend on the stochastic quantity $\theta\in I_{\Theta}$ in the case of deterministic initial distribution $f(\theta,x,v,0)=f_0(x,v)$ and is conserved in time under the conditions
$$
  v\hat f_h(x,v,t)\Big |_{\RR^d} = 0  \qquad \mbox{and} \qquad  \sum_{k=0}^M \mathcal H_{hk}[\hat{f}]\hat{f}_k(x,v,t)\Big|_{\RR^d} = 0
$$
 for all $h=0,\dots,M$.
 \end{prop}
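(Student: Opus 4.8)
The plan is to pass to the phase-space moments of the gPC coefficients. Setting $\hat\rho_h(t):=\int_{\RR^{d_x}\times\RR^{d_v}}\hat f_h(x,v,t)\,dx\,dv$, linearity of the integral together with $f^M=\sum_{h=0}^M\hat f_h\Phi_h$ gives $\rho(\theta,t)=\sum_{h=0}^M\hat\rho_h(t)\Phi_h(\theta)$, so it suffices to control the finitely many scalars $\hat\rho_h$.

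First I would integrate the gPC system \eqref{eq:CSgPC} over $\RR^{d_x}\times\RR^{d_v}$ for each fixed $h\in\{0,\dots,M\}$. The transport term, written as $v\cdot\nabla_x\hat f_h=\nabla_x\cdot(v\hat f_h)$ since $v$ does not depend on $x$, integrates to a flux of $v\hat f_h$ through the boundary at infinity, which vanishes by the hypothesis $v\hat f_h|_{\RR^d}=0$; similarly the right-hand side is $\nabla_v\cdot(\sum_{k=0}^M\mathcal H_{hk}[\hat f]\hat f_k)$, whose integral is the flux of $\sum_k\mathcal H_{hk}[\hat f]\hat f_k$ at infinity in velocity and vanishes by the hypothesis $\sum_k\mathcal H_{hk}[\hat f]\hat f_k|_{\RR^d}=0$. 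Hence $\frac{d}{dt}\hat\rho_h(t)=0$ for every $h$, i.e.\ each $\hat\rho_h$ is a constant of the motion, and therefore $\rho(\theta,t)=\rho(\theta,0)$ for all $t\ge0$ and all $\theta\in I_{\Theta}$.

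Next I would identify the initial value. For a deterministic initial datum $f(\theta,x,v,0)=f_0(x,v)$, the projection formula \eqref{eq:f_pro} yields $\hat f_h(x,v,0)=f_0(x,v)\int_{I_{\Theta}}\Phi_h(\theta)\,dg(\theta)$, and since $\Phi_0\equiv1$, orthogonality of the basis gives $\int_{I_{\Theta}}\Phi_h\,dg=\|\Phi_0\|_{L^2}^2\,\delta_{h0}=\delta_{h0}$. Thus $\hat f_h(\cdot,\cdot,0)=0$ for $h\ge1$ and $\hat f_0(\cdot,\cdot,0)=f_0$, so $\hat\rho_h(0)=0$ for $h\ge1$ and $\hat\rho_0(0)=\int f_0\,dx\,dv=:\rho_0$. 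Plugging into $\rho(\theta,0)=\sum_h\hat\rho_h(0)\Phi_h(\theta)$ gives $\rho(\theta,0)=\rho_0$, independent of $\theta$; combined with the conservation of each $\hat\rho_h$ from the previous step, $\rho(\theta,t)\equiv\rho_0$ for all $t\ge0$ and all $\theta\in I_{\Theta}$, which is the assertion.

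The only delicate point is the justification of the two boundary integrations, which requires sufficient decay of $\hat f_h$ and of the (linearly $v$-growing) nonlocal term $\mathcal H_{hk}[\hat f]\hat f_k$ in the phase-space variables for the divergence theorem to be applicable and for the boundary fluxes to vanish; this is exactly what the two stated hypotheses encode, so once they are granted no further obstacle arises, the remaining steps being elementary consequences of gPC orthogonality and of the normalization $\Phi_0\equiv1$.
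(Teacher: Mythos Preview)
Your proof is correct and follows essentially the same route as the paper: integrate each mode equation \eqref{eq:CSgPC} over phase space, use the two decay hypotheses to kill the boundary terms so that $\partial_t\int\hat f_h\,dx\,dv=0$, and then invoke orthogonality on the deterministic initial datum to see that only the $h=0$ mode carries mass. Your write-up is in fact more explicit than the paper's (you spell out the projection $\hat f_h(\cdot,\cdot,0)=f_0\,\delta_{h0}$ via $\Phi_0\equiv1$ and distinguish clearly between the conservation of each $\hat\rho_h$ and the vanishing of the initial higher modes), but the underlying argument is the same.
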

 
\begin{proof}
Integrating \eqref{eq:CSgPC} in phase space leads to
$$
  \partial_t \int_{\RR^{2d}}\hat f_h(x,v,t)dxdv=0
$$
under the above assumptions. Therefore, the total mass is conserved in time and since the  initial data does not depend on $\theta$, then $\hat f_h =0$ for $h\neq 0$ and the result follows.
\end{proof}
 
 \begin{prop}
 The mean velocity of \eqref{eq:CSgPC}-\eqref{eq:HgPC}
 is conserved in time provided
 \be\label{eq:cond_v}
 v\otimes v \hat{f}_h(x,v,t)\Big|_{\RR^d}=0 \qquad \mbox{and} \qquad v \Big[\sum_{k=0}^M \mathcal H_{hk}[\hat f]\hat{f}_k(x,v,t) \Big] \Big|_{\RR^d}=0\,.
 \ee
 \end{prop}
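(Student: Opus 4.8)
The plan is to take the first velocity moment of the gPC system \eqref{eq:CSgPC}, integrate it over the phase space $\RR^{d_x}\times\RR^{d_v}$, and show that the time derivative of $\int v\,\hat f_h\,dx\,dv$ reduces to an interaction integral that vanishes by antisymmetry --- the gPC analogue of the classical conservation of momentum for the Cucker--Smale model.

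First I would multiply \eqref{eq:CSgPC} by $v$ and integrate in $(x,v)$. For the transport term, writing componentwise $v_\ell(v\cdot\nabla_x\hat f_h)=\nabla_x\cdot(v_\ell\,v\,\hat f_h)$ (since $v$ does not depend on $x$) and applying the divergence theorem in $x$ produces exactly the boundary contribution controlled by the first condition in \eqref{eq:cond_v}, namely $v\otimes v\,\hat f_h|_{\RR^d}=0$, so that term drops. On the right-hand side I would integrate by parts in $v$: for each component $\ell$,
\[
\int_{\RR^{2d}} v_\ell\,\nabla_v\cdot\Big[\sum_{k=0}^M\mathcal H_{hk}[\hat f]\hat f_k\Big]dx\,dv
= \big[\text{bdry}\big]-\int_{\RR^{2d}}\Big(\sum_{k=0}^M\mathcal H_{hk}[\hat f]\hat f_k\Big)_\ell\,dx\,dv ,
\]
where the boundary term vanishes by the second condition in \eqref{eq:cond_v}. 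This leaves
\[
\partial_t\int_{\RR^{2d}} v\,\hat f_h(x,v,t)\,dx\,dv = -\int_{\RR^{2d}}\sum_{k=0}^M\mathcal H_{hk}[\hat f]\,\hat f_k\,dx\,dv .
\]

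The key step is to show the right-hand side vanishes. Unfolding the definitions \eqref{eq:HgPC} and \eqref{eq:H_MF} (so that $\mathcal H[\hat f_j](\eta,x,v,t)=\int\!\int H(\eta;|x-y|)(v-w)\hat f_j(y,w,t)\,dw\,dy$) and setting
\[
G_{jk}(\eta,t):=\int_{\RR^{4d}} H(\eta;|x-y|)\,(v-w)\,\hat f_j(y,w,t)\,\hat f_k(x,v,t)\,dw\,dy\,dx\,dv ,
\]
the right-hand side becomes $-\|\Phi_h\|_{L^2}^{-2}\int_{I_{\Theta}}\big(\sum_{j,k=0}^M G_{jk}(\eta,t)\,\Phi_j(\eta)\Phi_k(\eta)\big)\Phi_h(\eta)\,dg(\eta)$. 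The change of variables $(x,v)\leftrightarrow(y,w)$, together with the symmetry $H(\eta;|x-y|)=H(\eta;|y-x|)$ inherited from \eqref{eq:HCS}, yields $G_{jk}=-G_{kj}$ componentwise; since $\Phi_j(\eta)\Phi_k(\eta)$ is symmetric in $(j,k)$, the double sum $\sum_{j,k}G_{jk}\Phi_j\Phi_k$ vanishes identically in $\eta$. Hence $\partial_t\int v\,\hat f_h\,dx\,dv=0$ for every $h=0,\dots,M$, so the momentum $\int v\,f^M\,dx\,dv=\sum_m\Phi_m(\theta)\int v\,\hat f_m\,dx\,dv$ is constant in time; since the total mass $\rho$ is constant (cf. the previous proposition), $V(\theta,t)$ is conserved as well.

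I expect the main obstacle to be bookkeeping rather than anything conceptual: one has to keep the tensor/vector structure consistent (the $v\otimes v$ boundary term, the vector-valued $\mathcal H_{hk}[\hat f]$) and be careful that $\mathcal H[\hat f_j]$ entering \eqref{eq:HgPC} is precisely the nonlocal operator applied with $\hat f_j$ in place of $f$, so that the swap $(x,v)\leftrightarrow(y,w)$ genuinely produces the antisymmetry $G_{jk}=-G_{kj}$. Once this is set up, the cancellation against the symmetric factor $\Phi_j(\eta)\Phi_k(\eta)$ is immediate and the proposition follows.
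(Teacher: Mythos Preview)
Your proof is correct and follows essentially the same route as the paper: take the first velocity moment of \eqref{eq:CSgPC}, use the decay conditions \eqref{eq:cond_v} to discard boundary terms, and then exploit the swap $(x,v)\leftrightarrow(y,w)$ together with the symmetry of $H(\theta;\cdot)$ to cancel the interaction integral. Your introduction of the quantities $G_{jk}$ and the explicit antisymmetry $G_{jk}=-G_{kj}$ paired against the symmetric factor $\Phi_j\Phi_k$ is in fact a slightly cleaner packaging of the same cancellation the paper carries out by writing the double sum in two equivalent forms after relabeling.
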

 
 \begin{proof}
 For all $h=0,\dots,M$ let us consider the quantity $ \int_{\RR^{2d}}v\hat f_h(x,v,t)dxdv$. From \eqref{eq:CSgPC} we have
 \[
 \begin{split}
 \partial_t \int_{\RR^{2d}}v\hat f_h(x,v,t)dxdv +& \int_{\RR^{2d}}\nabla_x \cdot (v\otimes v \hat{f}_h(x,v,t)) dxdv = \\
 &\qquad \int_{\RR^{2d}}v\nabla_v\cdot \Big[ \sum_{k=0}^M \mathcal H_{hk}[\hat f]\hat{f}_k(x,v,t) \Big]dxdv.
 \end{split}\]
 Thanks to \eqref{eq:cond_v} we have
 \[
 \partial_t \int_{\RR^{2d}}v\hat f_h(x,v,t)dxdv = -\sum_{k=0}^M \int_{\RR^{2d}}\mathcal H_{hk}[\hat f]\hat{f}_k(x,v,t)dxdv,
 \]
 from the definition of $\mathcal H_{hk}[\cdot]$, it follows that
 \[
 \begin{split}
 &\sum_{m,k=0}^M \int_{\RR^{2d}}\int_{\RR^{2d}}H(x,y;\theta)v\hat f_m(y,w,t)dwdy\hat f_k(x,v,t)dxdv = \\
&\qquad  \sum_{m,k=0}^M \int_{\RR^{2d}}\int_{\RR^{2d}}H(y,x;\theta)w\hat f_m(x,v,t)dxdv\hat f_k(y,w,t)dydw,
\end{split}
 \]
implying that
$$
\sum_{m,k=0}^M \mathcal H_{hk}[\hat f]\hat{f}_k(x,v,t)=0
$$
due to the symmetry of the interaction function $H(\cdot,\cdot;\theta)$ for all $\theta$. Being $f^M(\theta,x,v,t)=\sum_{m=0}^M \hat f_m(x,v,t)\Phi_m(\theta)$, the result follows.
\end{proof}
 
 \subsection{Monte Carlo gPC scheme}
 Similarly to classical spectral methods, the solution of the coupled system $f^M$ looses its positivity and so it looses a clear physical meaning. This fact represents a serious drawback for real world applications of these expansions for which positivity of statistical quantities, like the expected solution, is necessary. In order to overcome this difficulty, we construct an effective numerical method for the solution of the mean-field stochastic equations of collective behavior having roots in Monte Carlo methods (see \cite{PR_ESAIM,PT} for an introduction). In particular we employ the results reported in Section \ref{sec:MF}, where we formally derived the mean-field description of an interacting system of agents from a microscopic stochastic dynamics. The proposed method for mean-field equations of collective behavior is capable to efficiently approximate statistical quantities of the system \eqref{eq:mean_teo}-\eqref{eq:var_teo} and to conserve their positivity. 

\subsubsection{Stochastic Galerkin methods for the particle system}
Similarly to what we described for the mean-field equations we can consider the gPC approximation of the microscopic dynamics. We approximate the position and the velocity of the $i$th agent as follows
\begin{equation*}
x_i(\theta,t) \approx x_i^M = \displaystyle\sum_{k=0}^M \hat{x}_{i,k}(t)\Phi_k(\theta), \qquad v_i(\theta,t) \approx v_i^M = \displaystyle\sum_{k=0}^M \hat{v}_{i,k}\Phi_k(\theta),
\end{equation*}
where 
\begin{equation*}
\hat x_{i,k} = \int_{I_{\Theta}}x_i(\theta,t)\Phi_k(\theta)dg(\theta),\qquad \hat v_{i,k}=\int_{I_{\Theta}}v_i(\theta,t)\Phi_k(\theta)dg(\theta).
\end{equation*}
We obtain the following polynomial chaos expansion for all $h=0,\dots,M$
\begin{equation*}
\begin{cases}
\dfrac{d}{dt} \hat{x}_{i,h}(t)  &= \hat{v}_{i,h}(t),\\
\dfrac{d}{dt} \hat{v}_{i,h}(t)  &= \displaystyle\dfrac{1}{N}\sum_{j=1}^N \sum_{k=0}^M e^{ij}_{hk}(\hat v_{j,k}(t)-\hat v_{i,k}(t)) 
\end{cases}
\end{equation*}
where 
\begin{equation*}
e^{ij}_{hk} =\dfrac{1}{\|\Phi_h(\theta) \|^2} \int_{I_{\Theta}} H(\theta;x_i,x_j)\Phi_k(\theta)\Phi_h(\theta) dg(\theta),
\end{equation*}
defines a time-dependent matrix $\mathcal E = \left[e_{hk}^{ij}\right]_{h,k=0,\dots,M}$. At the microscopic level the conservation of the mean velocity holds thanks again to the symmetry of the interaction function $H(\cdot,\cdot;\theta)$ introduced in \eqref{eq:HCS}.
In fact, the gPC approximation $v^M(\theta,t)$ also conserves the mean velocity as proven in \cite{APZa}.

\subsubsection{Monte Carlo-gPC approximations }
Let us finally tackle the limiting stochastic mean-field equation in its gPC approximation. As already observed the particle solution of \eqref{eq:CSgPC}-\eqref{eq:HgPC} corresponds to compute the original $O(MN^2)$ dynamics, since at each time step and for each gPC mode every agent averages its velocity with the projected velocities of the whole set of agents. A reduction in computational cost may be achieved through a Monte Carlo (MC) evaluation of the interaction step as introduced in \cite{AlbiPareschi13}. Once we have an effective MC algorithm for transport and interaction in phase space, the expected solution may be reconstructed from expected positions and velocities of the microscopic system, which has been computed in the gPC setting. 

\begin{alg}[\textbf{MC-gPC for stochastic mean-field equations}]
\qquad\qquad\qquad\qquad
\begin{itemize}
\item[1.] Consider $N$ samples $(x_{i},v_{i})$ with $i=1,\dots,N$ from the initial $f_0(x,v)$, and fix $S\le N$ a positive integer;
\item[2.] \textit{for $n=0$ to $T-1$}
\begin{itemize}
\item [] \hspace{-0.7cm}\textit{for $i = 1$ to $N$}
\item [a)] sample $S$ particles $j_1,\dots,j_S$ uniformly without repetition among all particles;
\item[b)] perform gPC up to order $M\ge 0$ over the set of $S\le N$ particles: we need to compute the dynamics for $(\hat x_{j_s,h},\hat v_{j_s,h})$ where $s = 1,\dots,S$ and $h =0,\dots,M$.
\item[c)] compute the position and velocity change 
\[\begin{split}
\hat v_{i,h}^{n+1} = \hat v_{i,h} + \dfrac{\Delta t}{S}\sum_{s=1}^S\sum_{k=0}^M e_{kh}^{i{j_s}}(\hat v_{j_s,k}-\hat v_{i,k})
\end{split}\]
\item[]  \hspace{-0.7cm}\textit{end for}
\end{itemize}
\item[3.] Reconstruction $\mathbb E_{\theta}[f(x,v,\theta,n\Delta t)]$ .
\item[]\textit{end for}
\end{itemize}
\label{alg:1}
\end{alg}

\begin{figure}
\centering
\includegraphics[scale=1.0]{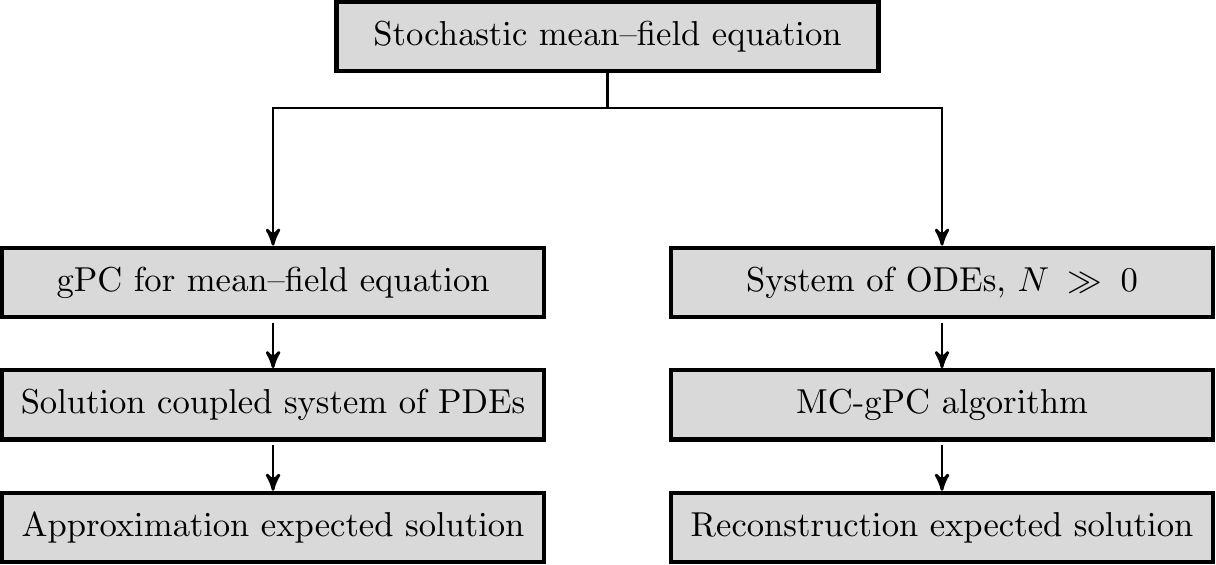}
\caption{Two possible numerical approaches to stochastic mean-field models, the right branch describe the MC-gPC scheme.}
\label{fig:scheme}
\end{figure}

The Monte Carlo evaluation of the interaction step described in Algorithm \ref{alg:1} allows to reduce the overall cost to $O(MSN)$, where $S\ll N$.  Of course, in the case $S=N$, we obtain the original cost of the $N$ particles system. Clearly, the introduced method is still spectrally accurate with respect to the stochastic variable $\theta$ provided we have a smooth dependence of the particle solution from the random field.

In Figure \ref{fig:scheme} we sketch the two possible approaches to numerical solution of stochastic mean-field equations. On the left, we first consider the gPC approximation of the original PDE of the mean-field type, then through a deterministic solver we tackle the coupled system of equations in order to approximate the expected quantities. On the right we consider the MC-gPC scheme, therefore we work on the particle system thanks to a Monte Carlo evaluation of the interactions, then by considering the gPC scheme at the microscopic level we can reconstruct statistical quantities. 
Several approaches are possible when reconstructing densities from particles, in the present manuscript we consider the histogram of position and velocity of the set of particles in the phase space. Other approaches are the so-called weighted area rule \cite{HE}, where each particle is counted in a computational cell and is counted in the neighbor cells with a fraction proportional to the overlapping area, or reconstruct the density function by a convolution of the empirical particle distribution with a suitable mollifier \cite{PR_ESAIM}. Clearly, the resulting method preserves the positivity of the distribution function.\\
\begin{rem}   
Concerning the computational accuracy of the method in the case $S=N$, we have a convergence rate of the order $O(1/\sqrt{N})$, where $N$ is the number of samples, in the physical space due to the Monte Carlo approximation and a spectral convergence with respect to $M$ in the random space.  In the case $S < N$ the fast evaluation of the interaction sum with $S$ points, in a single time step, introduces an additional error $O(\sqrt{1/S-1/N})$ at the particles level. Therefore, in practical applications very few modes are necessary to match the accuracy of the Monte Carlo solver. In particular, we will show how, for a fixed number of particles $N$, macroscopic expected quantities are approximated with spectral accuracy, typical of SG methods.  
\end{rem}


\section{Applications}

In this section we present numerical tests based on stochastic mean-field equation of collective behavior. In particular, we give numerical evidence of the effectiveness of MC-gPC methods showing that the method does not loose the spectral convergence of gPC methods when approximating the expected solution of the system and that preserves the positivity of the statistical quantities. In all tests the time integration has been performed through a 4$th$ order Runge-Kutta method. 

\subsection{1D Tests}

\paragraph{The space homogeneous case} 
Let us consider the space independent case in the one dimensional setting, i.e. $H(\theta;x_i,x_j)=K(\theta)>0$, for all $\theta$. The evolution of the density function $f(\theta,v,t)$, $v\in\RR$ is given by the following stochastic mean-field equation 
\be\label{eq:hom_MF}
\partial_t f(\theta,v,t) = \partial_v \Big[K(\theta)(v-u)f(\theta,v,t) \Big],
\ee
with $u = \int_{I_{\Theta}}vf_0(v)dv$, whose gPC approximation is given for all $h=0,\dots,M$ by
\be\label{eq:hom_MFgPC}
\partial_t \hat f_h(v,t) = \dfrac{1}{\| \Phi_h \|^2} \partial_v \left[ \sum_{k=0}^M(v-u) \mathcal H_{hk}\hat f_k(v,t)\right],
\ee
where now
\[
\mathcal H_{hk} = \int_{I_{\Theta}}K(\theta)\Phi_h(\theta)\Phi_k(\theta)dg(\theta),\qquad  \hat f_h(v,t) = \int_{I_{\Theta}}f(\theta,v,t)\Phi_h(\theta)dg(\theta).
\]

\begin{figure}[t]
\centering
\includegraphics[scale=0.5]{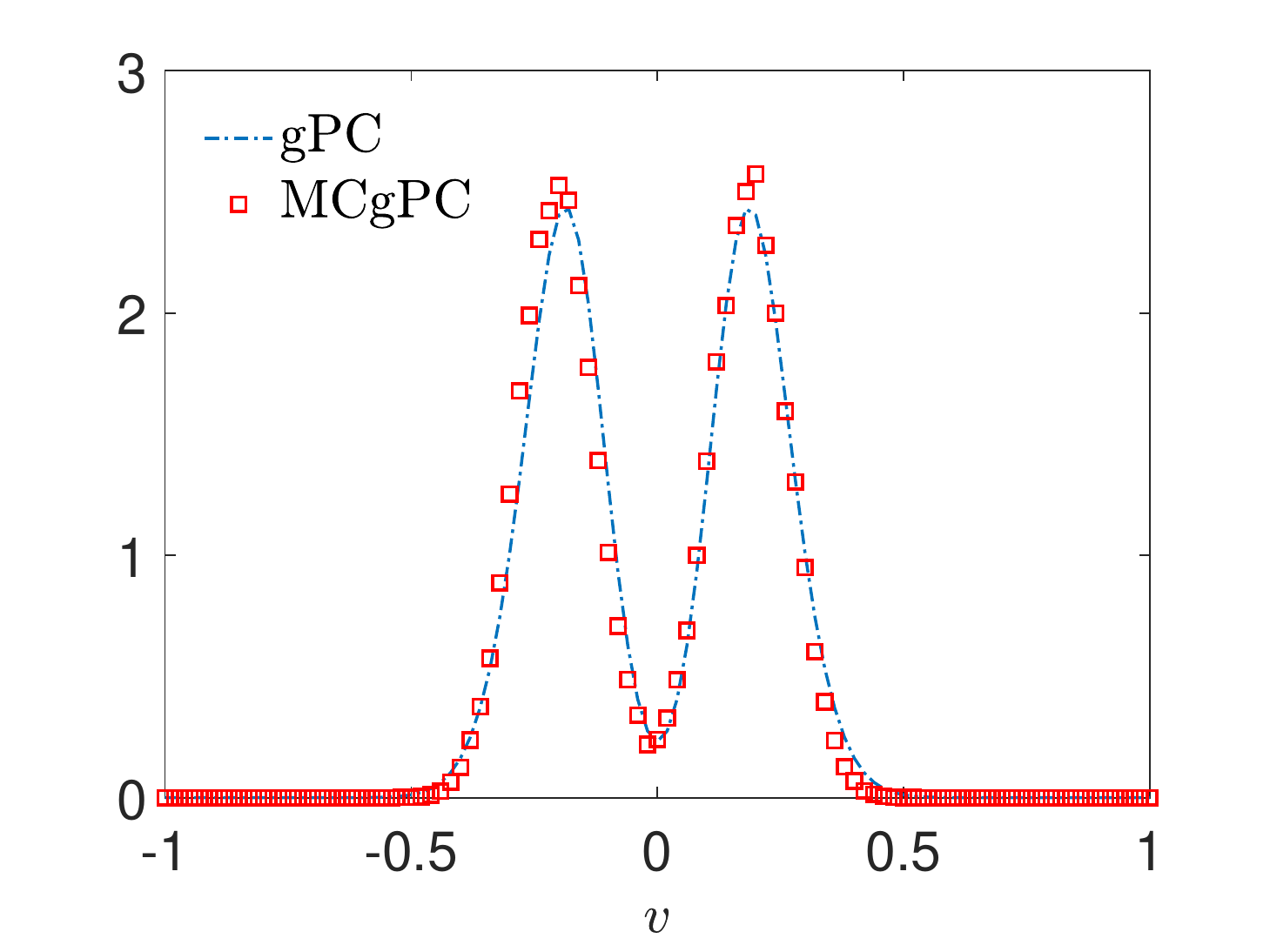}
\includegraphics[scale=0.5]{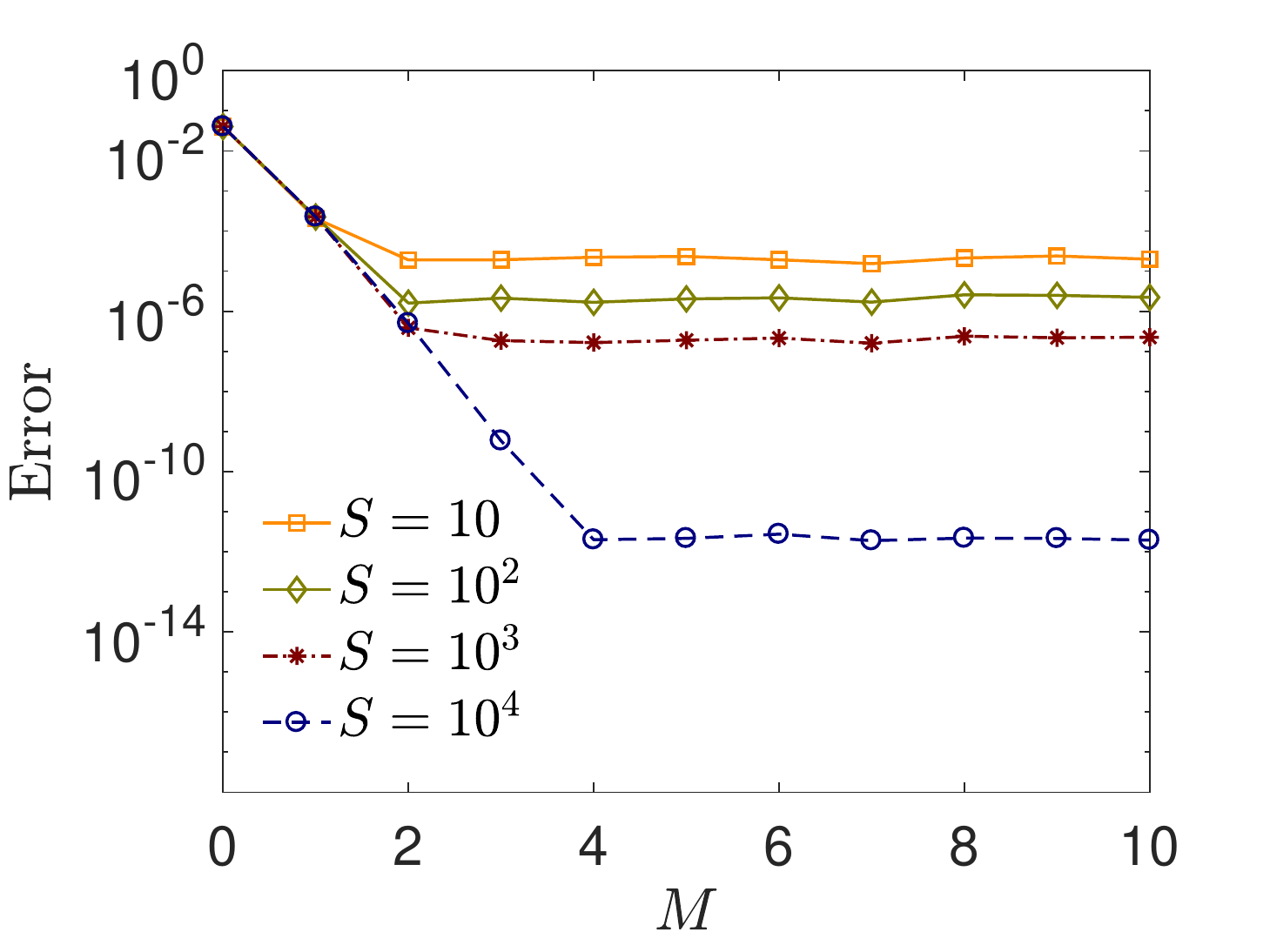}
\caption{\textbf{Left}: expected density at time $T=1$ obtained from \eqref{eq:hom_MFgPC} through the gPC and the one obtained through MCgPC schemes with a sampling of $S=5$ at each time step. We considered $101$ gridpoints in the velocity space, $\Delta t=\Delta v^2$, the gPC expansion has been performed up to order $M = 5$. \textbf{Right}: convergence of the MCgPC algorithm based on a reference expected temperature $\mathcal T^{ref}$ at time $T=1$ calculated at the particle level with $10^5$ particles. }
\label{fig:1}
\end{figure}

\begin{figure}[ht!]
\centering
\includegraphics[scale=0.5]{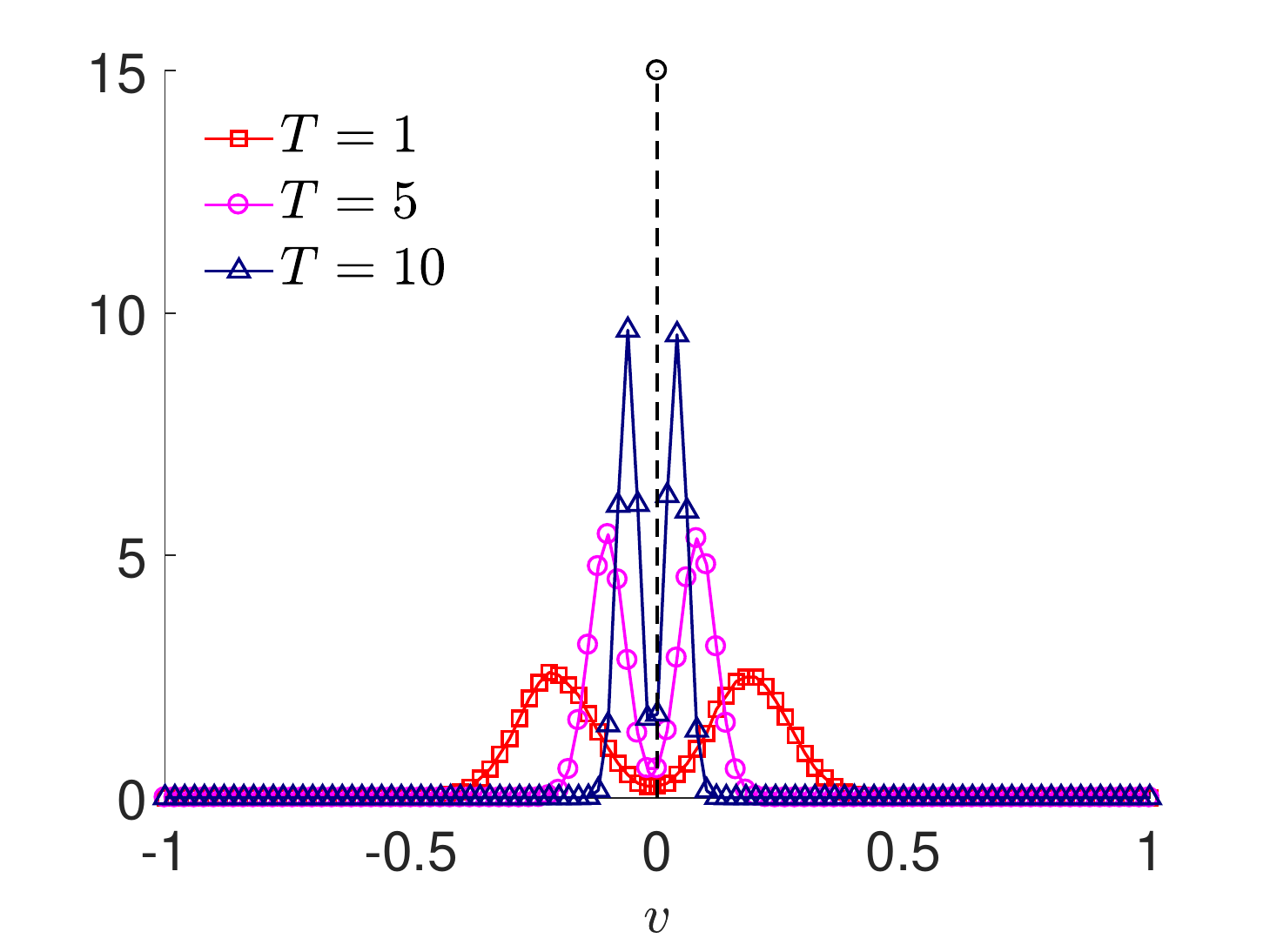}
\includegraphics[scale=0.5]{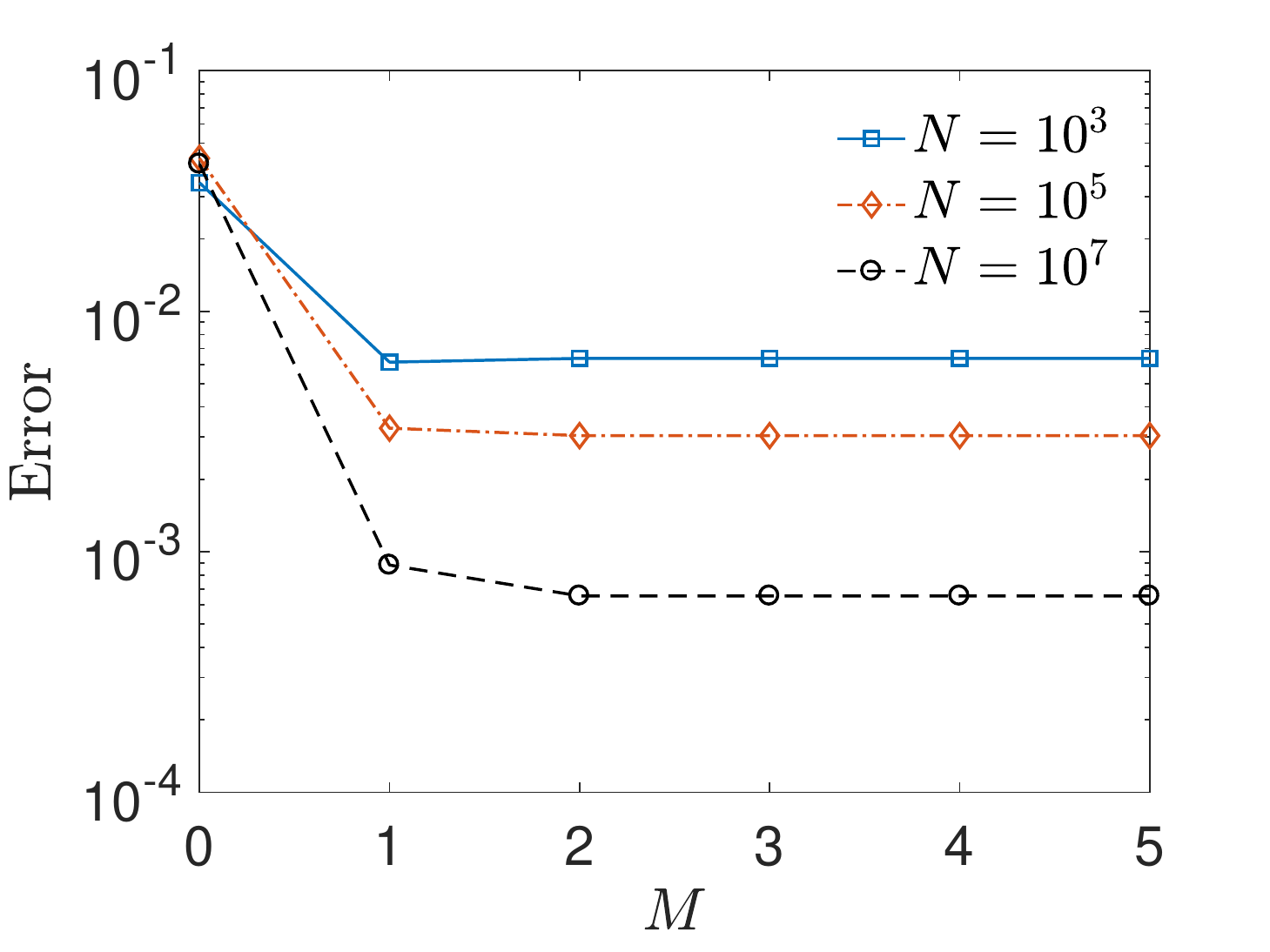}
\caption{\textbf{Left}: evolution of the expected density toward the Dirac delta $\delta(v-u)$, $u=0$ for the MCgPC scheme at different times with $S=50$ uniformly sampled points at each time step. \textbf{Right}: mean-field convergence of the MCgPC algorithm with fixed $S=50$ and an increasing number of particles $N$, we compare the obtained expected temperature with a reference one $\mathcal T^{ref}$ at time $T=1$ which is computed from the mean-field problem evolved with $801$ gridpoints. }
\label{fig:2}
\end{figure}

The long time solution of \eqref{eq:hom_MF} is a Dirac delta $\delta(v-u)$ provided $K(\theta)>0$ for all $\theta$, see \cite{CFRT,TZ}. We compute the transient behavior of the gPC coupled system of homogeneous equations \eqref{eq:hom_MFgPC} through a central difference scheme. 
Let us consider an initial density function $f_0(v)$ of the form 
\be\label{eq:f0}
f_0(v) = \beta \left[\exp(-\dfrac{(v-\mu)^2}{2\sigma^2})+\exp(-\dfrac{(v+\mu)^2}{2\sigma^2})\right], \qquad\sigma^2 = 0.1, \mu = \dfrac{1}{4},
\ee
with $\beta>0$ a normalization constant. Discrete samples of the initial velocities of the microscopic system of ODEs are obtained from $f_0(v)$ in \eqref{eq:f0}.

In Figures \ref{fig:1}-\ref{fig:2} we study the convergence of the expected temperature of the system
$$
\mathcal T =\int_{I_{\Theta}} \int_{\RR} (v-u)^2  f(\theta,v,t)dv dg(\theta)
$$
obtained through the MCgPC algorithm for an increasing order of the gPC expansion. We considered $N=10^4$ particles computing the evolution up to time $T=1.0$ with time step $\Delta t=10^{-2}$, the MCgPC method considered an increasing number of interacting particles $S=10,10^2,10^4$. In particular in Figure \ref{fig:2}, we consider as reference temperature the one obtained at the mean-field level with a high number of gridpoints at time $T=1$, whereas in Figure \ref{fig:1} as reference temperature we consider the one obtained with $N=10^5$ particles interacting with the full set, i.e. $S=10^5$, at time $T=1$. 

We can observe in Figure \ref{fig:1} (right) that for this test case the error with respect to $S$, for a given $M \geq 4$, decays as $O(1/S-1/N)$ instead of $O(\sqrt{1/S-1/N})$. This is due to the fact that we are evaluating the error for the temperature and that the mean velocity is zero.

\begin{figure}[ht!]
\centering
\subfigure[$t=0$]{\includegraphics[scale=0.410]{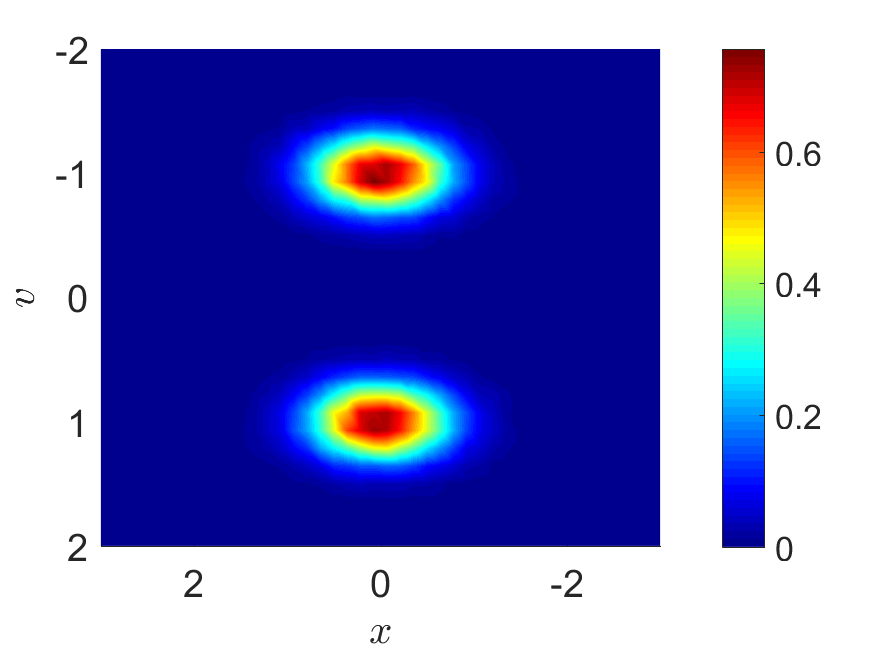}}
\subfigure[$t=1$]{\includegraphics[scale=0.410]{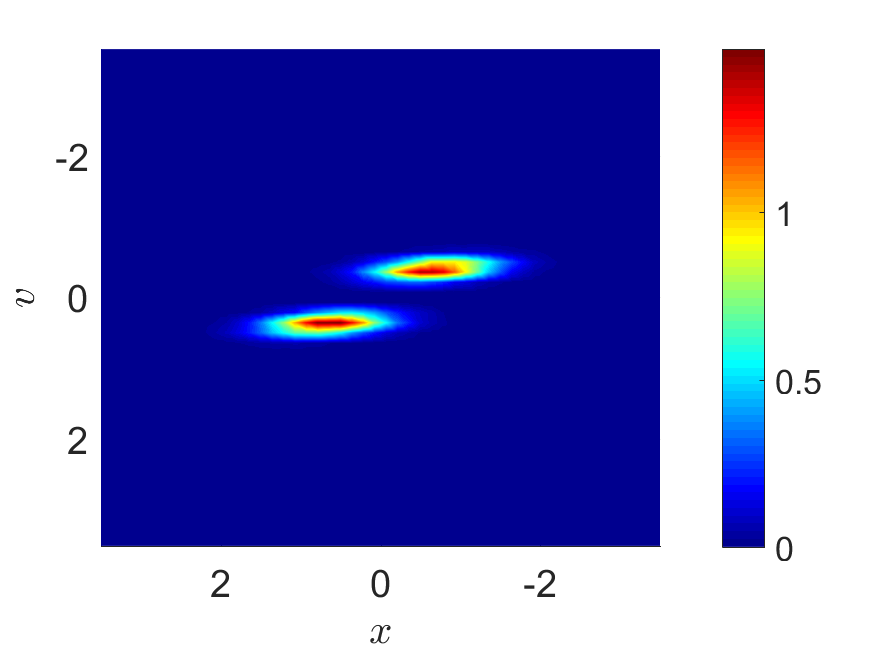}}\\
\subfigure[$t=3$]{\includegraphics[scale=0.410]{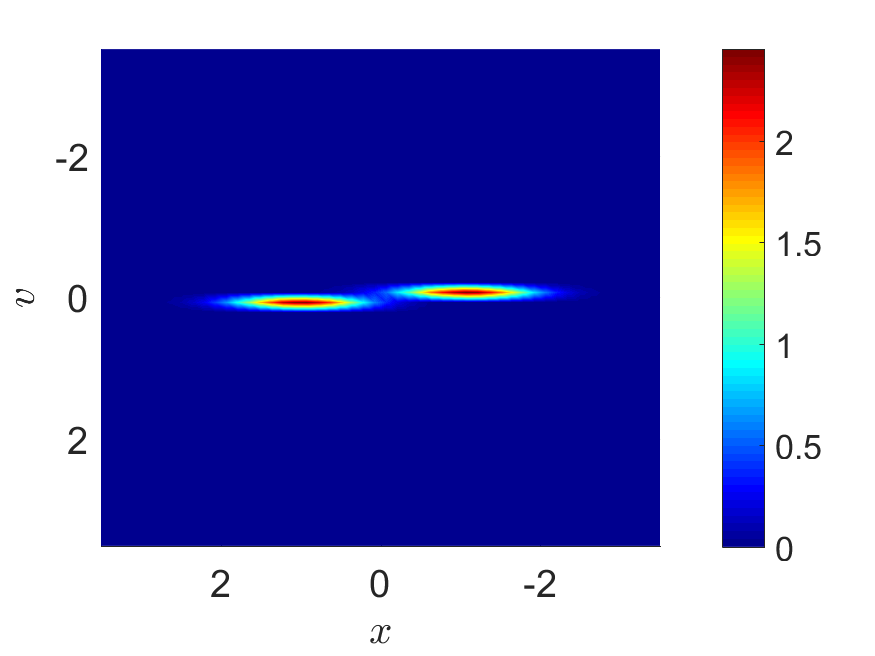}}
\subfigure[$t=5$]{\includegraphics[scale=0.410]{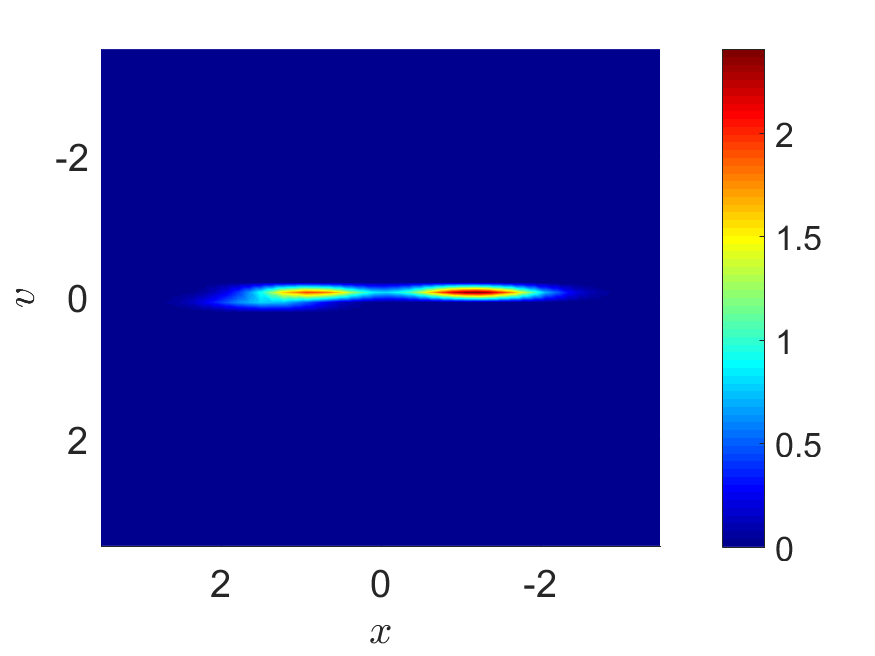}}
\caption{1D Cucker-Smale dynamics computed through the MCgPC algorithm over the time interval $t\in[0,5]$, $\Delta t=10^{-2}$. We considered $N=10^5$ agents, $S=5$, and a stochastic Galerkin decomposition up to order $M=5$. Stochastic interactions are given by $H(x_i,x_j;\theta)$ with $\gamma(\theta)=0.05+0.05\theta$, $\theta\sim U([-1,1])$.  }
\label{fig:CS_1D}
\end{figure}
\vspace{-0.5cm}
\paragraph{Stochastic 1D Cucker-Smale dynamics}
In this test, we consider the 1D Cucker-Smale dynamics. Let us consider as initial distribution the following bivariate and bimodal distribution
\[
f_0(x,v) = \dfrac{1}{2\pi\sigma_x\sigma_v} \exp\Big(-\dfrac{x^2}{2\sigma_x^2}\Big) \Big[ \exp\Big(-\dfrac{(v+\bar v)^2}{2\sigma_v^2}\Big)+\exp\Big(-\dfrac{(v+\bar v)^2}{2\sigma_v^2}\Big) \Big],
\]
with $\bar v = 1$, $\sigma_x^2=0.5$, $\sigma_v^2=0.2$. Our initial data for particle postions and velocities are sample from $f_0$.

In Figure \ref{fig:CS_1D} we present the evolution over the time interval $[0,5]$ of the expected distribution following the stochastic Cucker-Smale dynamics with stochastic interactions. The results are obtained through the MCgPC scheme with $N=10^5$ particles, whose interactions are calculated over subsets of $S=5$ particles. The stochastic interactions are given by
\be\label{eq:num_CS1D}
H(\theta;|x_i^M-x_j^M|)=\dfrac{1}{(1+|x_i^M-x_j^M|^2)^{\gamma(\theta)}},
\ee
with $\gamma(\theta) = 0.1 + 0.05\theta$ a stochastic quantity depending on $\theta\sim U([-1,1])$. 

We considered an $M=5$ order stochastic Galerkin method at the microscopic level. The expected distribution is then reconstructed in the domain $[-2,2]\times [-2,2]$ discretized with $50$ gridpoints both in space and velocity. 

\begin{figure}[ht!]
\centering
\subfigure[$t=0$]{\includegraphics[scale=0.420]{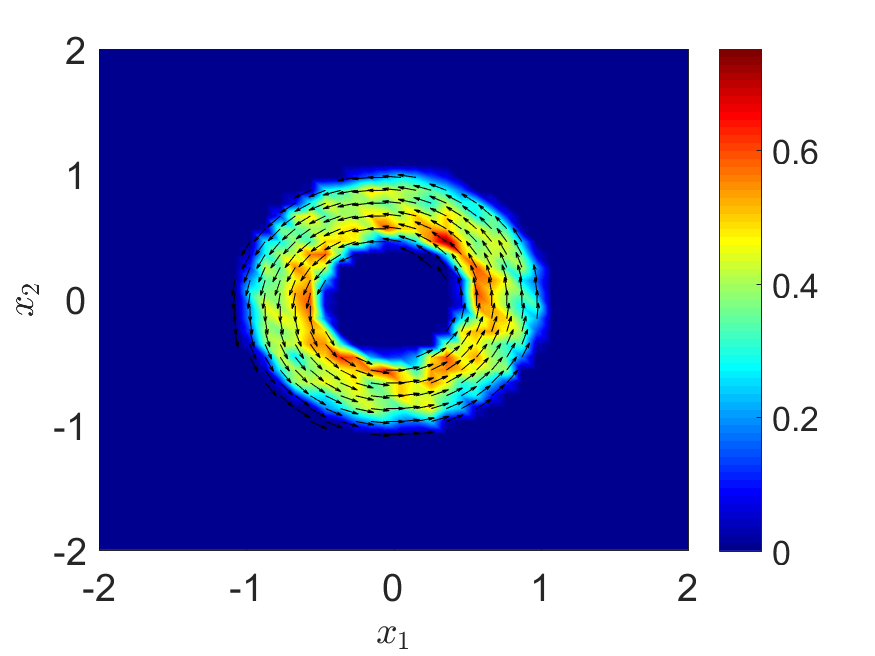}}
\subfigure[$t=2$]{\includegraphics[scale=0.420]{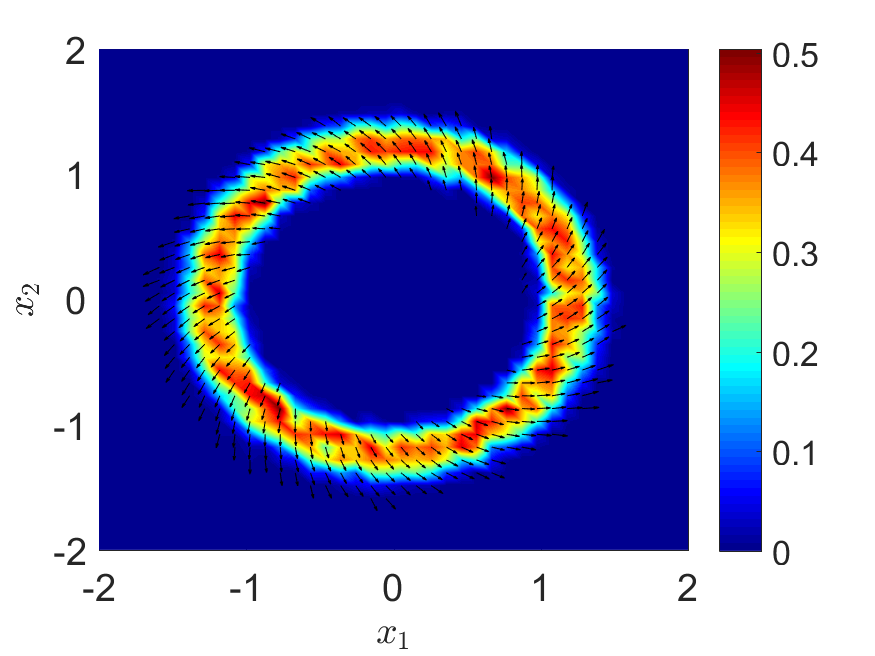}}\\
\subfigure[$t=8$]{\includegraphics[scale=0.420]{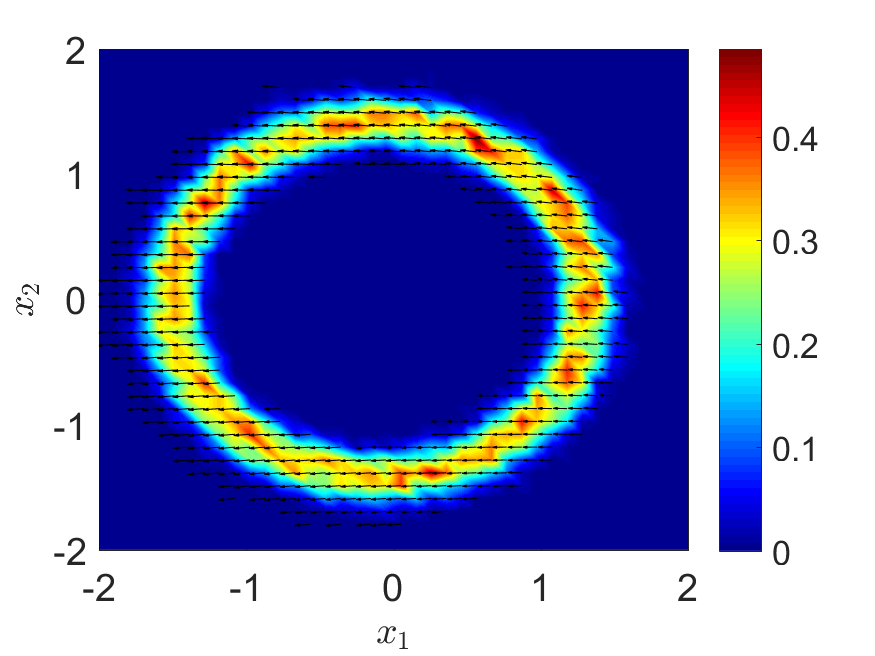}}
\subfigure[$t=10$]{\includegraphics[scale=0.420]{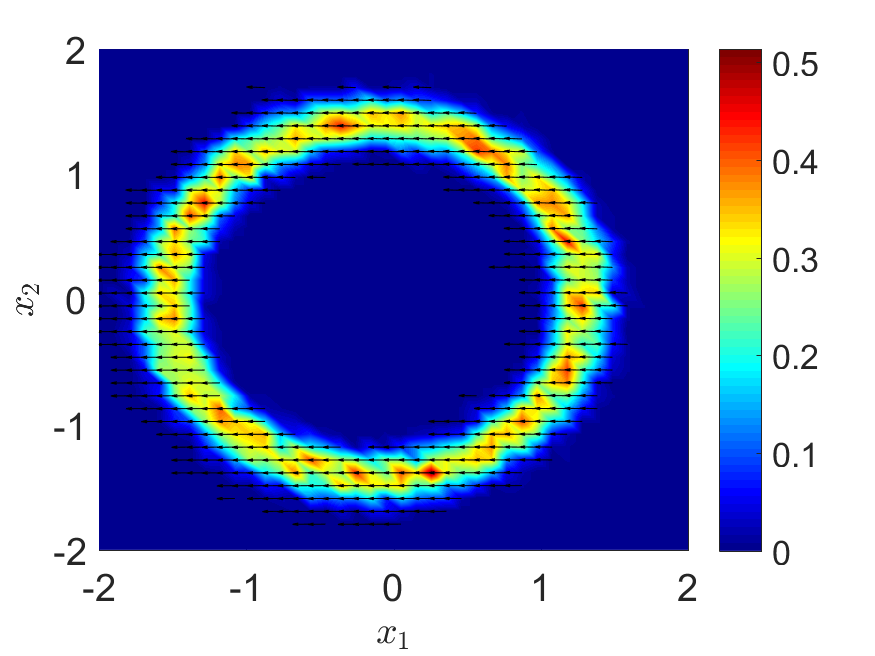}}
\caption{2D Cucker-Smale with $N=10^5$ agents, $H(\theta;|x_i-x_j|)$ with $\gamma(\theta)=0.1+0.05\theta$, $\theta\sim \mathcal U([-1,1])$.  
2D Cucker-Smale dynamics computed through the MCgPC algorithm over the time interval $t\in[0,10]$, $\Delta t=10^{-2}$. We considered $N=10^5$ agents, $S=5$, and a stochastic Galerkin decomposition of order $M=10$. Stochastic interactions are given by \eqref{eq:num_CS1D}. }
\label{fig:4}
\end{figure}

\subsection{2D Tests}

\paragraph{Stochastic 2D Cucker-Smale.}

In Figure \ref{fig:4} we computed the evolution over the time interval $t\in[0,10]$ of 2D Cucker-Smale non homogeneous mean-field model in the space domain $[-2,2]\times [-2,2]$ through the MCgPC scheme. As initial distribution we considered uniformly distributed $N=10^5$ particles on a 2D annulus with a circular counterclockwise motion 
\[
f_0(x,v) = \dfrac{1}{|C|} \chi(x\in C) \;\delta\left(v-\dfrac{k\wedge x}{|x|}\right),
\]
being $C:=\{ x\in\RR^2: 0.5 \le |x_1-x_2|\le 1\}$ and $ k$ the fundamental unit vector of the $z$-axis. 

Similarly to the 1D case we considered stochastic interactions described by \eqref{eq:num_CS1D}. The mean-field Monte Carlo step has been considered with $S=5$ interacting particles. The stochastic Galerkin projection has been considered up to order $M=10$. 

The evolution shows how the initial distribution flocks exponentially fast and that at time $t=8$ the final flocking structure is essentially reached. The reconstruction step of the mean density for position and velocity has been done with $50$ gridpoints in both space dimensions. In order to highlight the flocking formation in each figure we also add the velocity field (arrows in the plots) to illustrate the flock direction. 

\begin{figure}
\centering
\subfigure[$t=10$]{\includegraphics[scale=0.420]{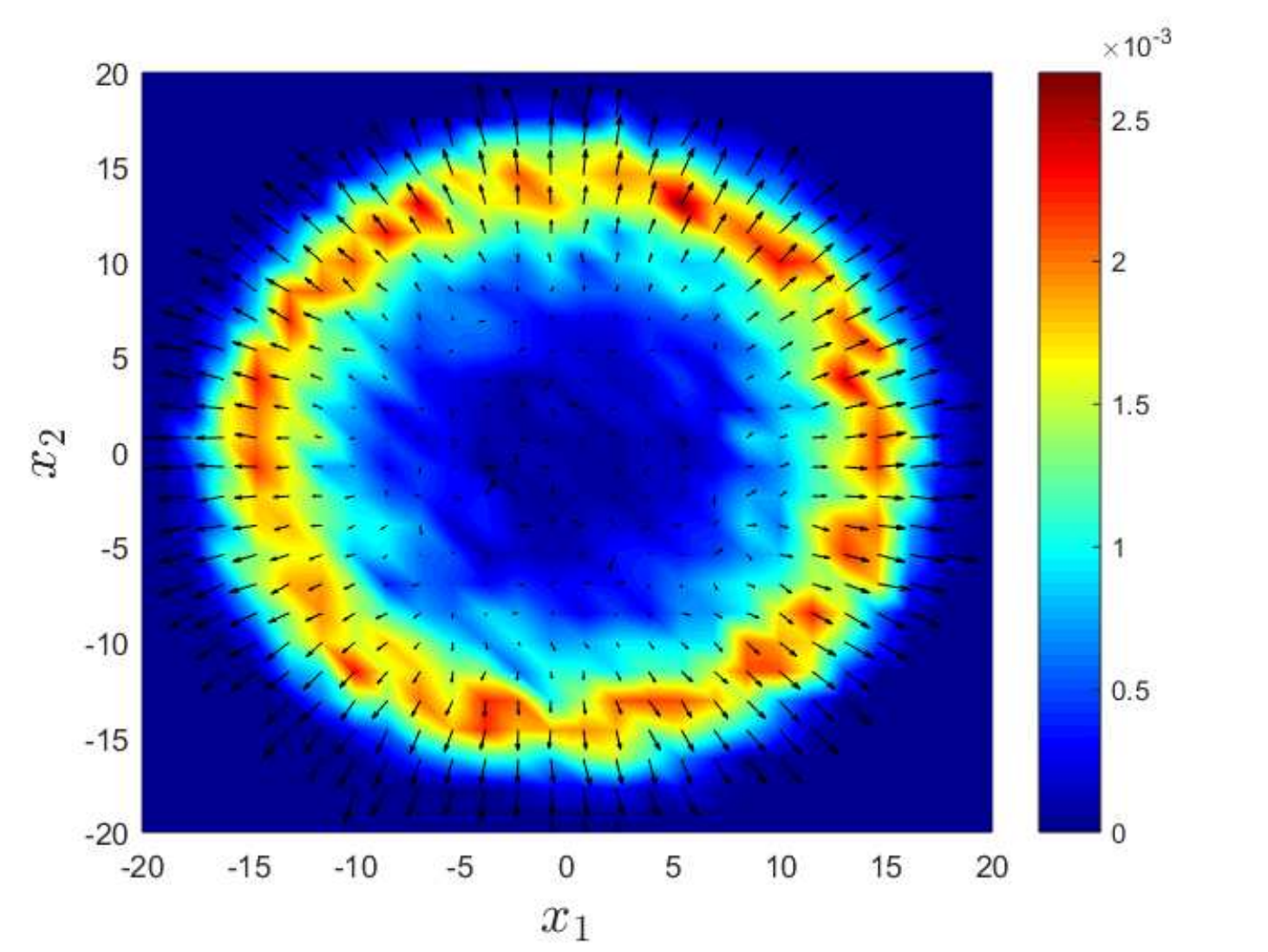}}
\subfigure[$t=20$]{\includegraphics[scale=0.420]{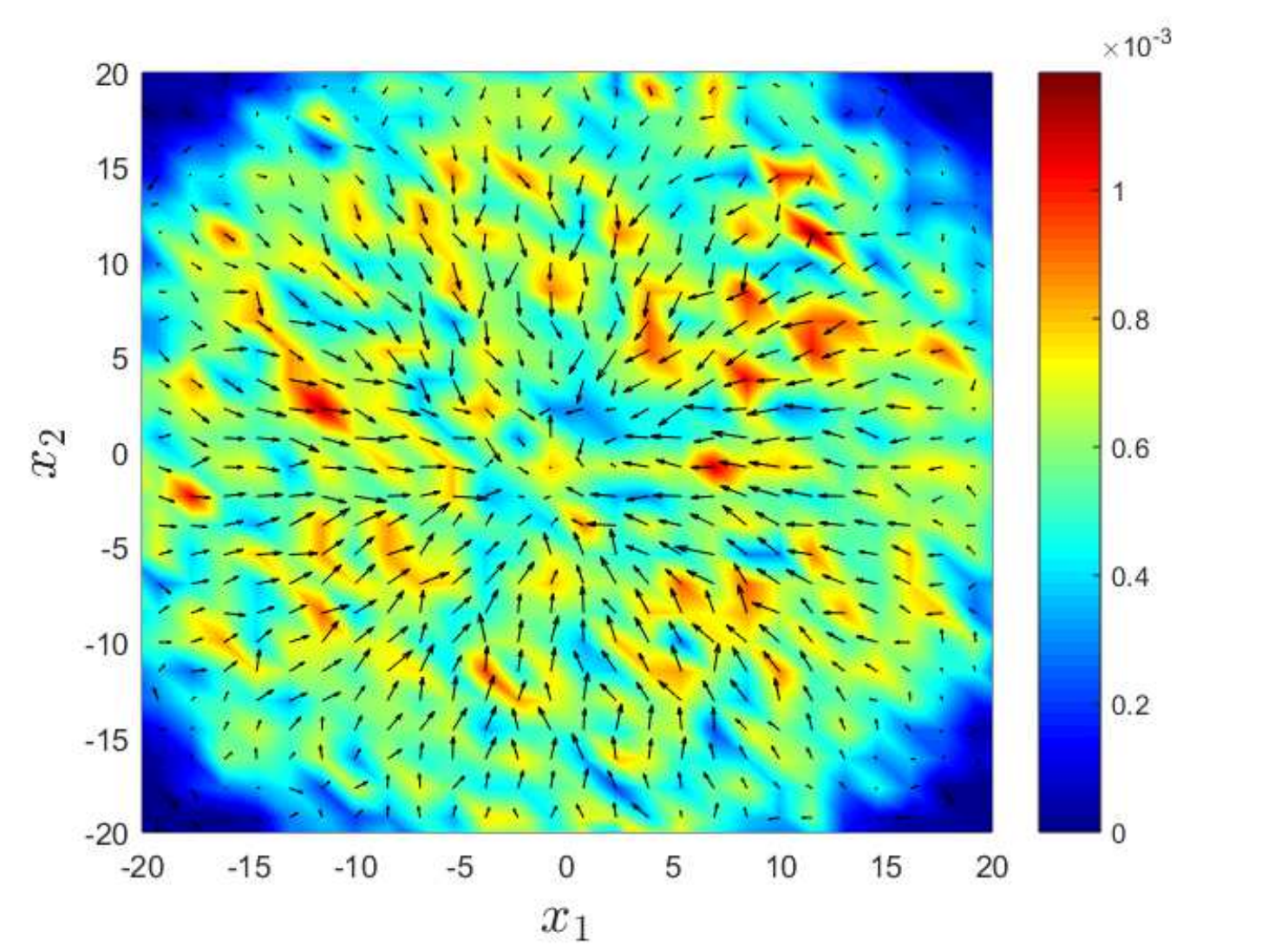}}\\
\subfigure[$t=150$]{\includegraphics[scale=0.420]{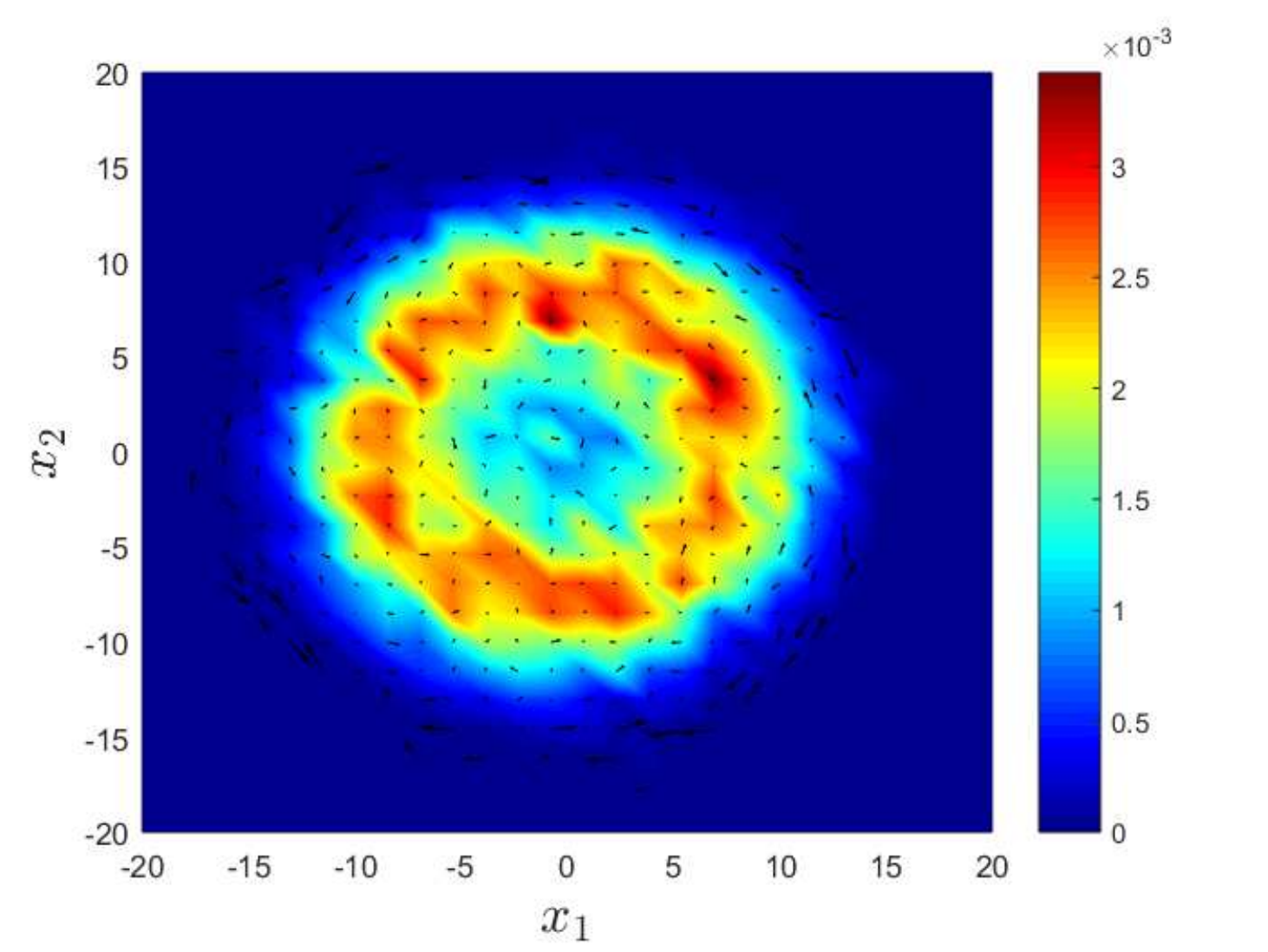}}
\subfigure[$t=200$]{\includegraphics[scale=0.420]{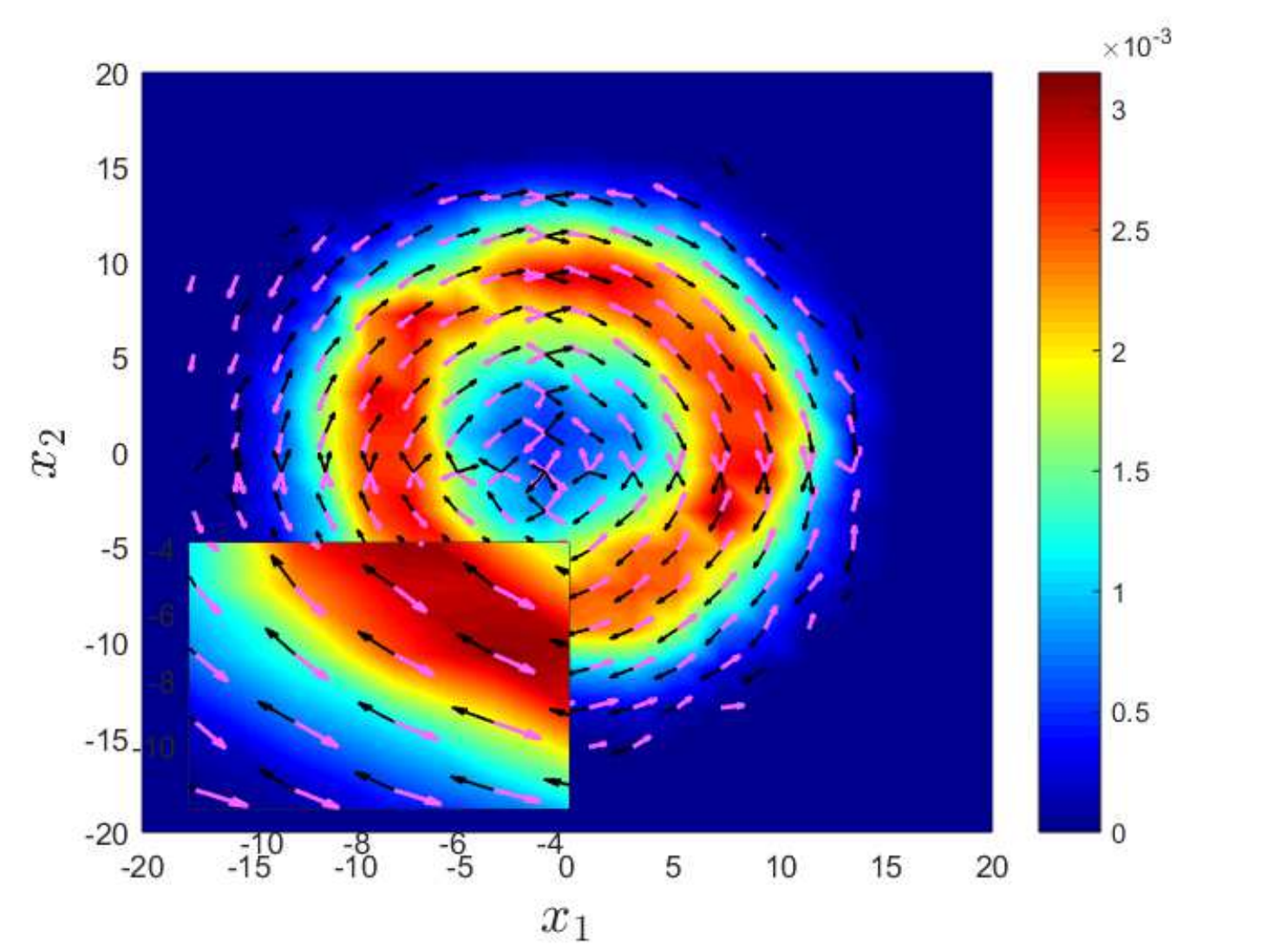}}
\caption{Evolution of the expected density from the D'Orsogna-Bertozzi et al. model with stochastic interactions through the MCgPC scheme over the time interval $t\in[0,200]$, $\Delta t=10^{-2}$. We considered $N=10^5$ agents, $S=10$ and a stochastic Galerkin decomposition of order $M=8$.  At $t=200$ the velocity field is reconstructed discriminating the orientation of the particles to highlight the emerging double mill structure. }
\label{fig:mill}
\end{figure}

\paragraph{D'Orsogna-Bertozzi et al. model}
In this paragraph we consider the D'Orsogna-Bertozzi et al. model to reproduce in the stochastic setting the typical mill dynamics described in \cite{CFTV,DOCBC}. According to what we introduced in Section \ref{sec:other_mod} we consider long-range attraction and short-range repulsion given by a stochastic Morse potential with $C_A=C_A(\theta)$, $C_R=C_R(\theta)$.

In Figure \ref{fig:mill} we present the evolution of the solution over $t\in[0,200]$ with the same initial data as in the previous example. In order to perform the MCgPC scheme we consider $N=10^5$ agents, the mean-field Monte Carlo is considered with $S=10$ interacting agents at each time step and the stochastic Galerkin projection uses $M=8$ terms. The stochastic Morse potential is given by $C_A(\theta) = 30+\theta$, $C_R(\theta) = 10+\theta$, with $\theta\sim \mathcal U([-1,1])$. Other typical parameters are the following $\ell_A=100$, $\ell_R=3$. 

From the reconstruction of the expected density, over a $50\times 50$ grid, we can observe the emergence of a double mill structure. This fact is reminiscent of the phenomena reported in \cite{CKR} in which single mills prefer to bifurcate to double mills for small noise added to the particle model. This interesting phase transition seems to be a robust behavior for general noisy data based on our present considerations of random parameters.

\subsection{2D Test with 2D uncertainty}

\begin{figure}
\centering
\subfigure[$t=0.5$]{\includegraphics[scale=0.45]{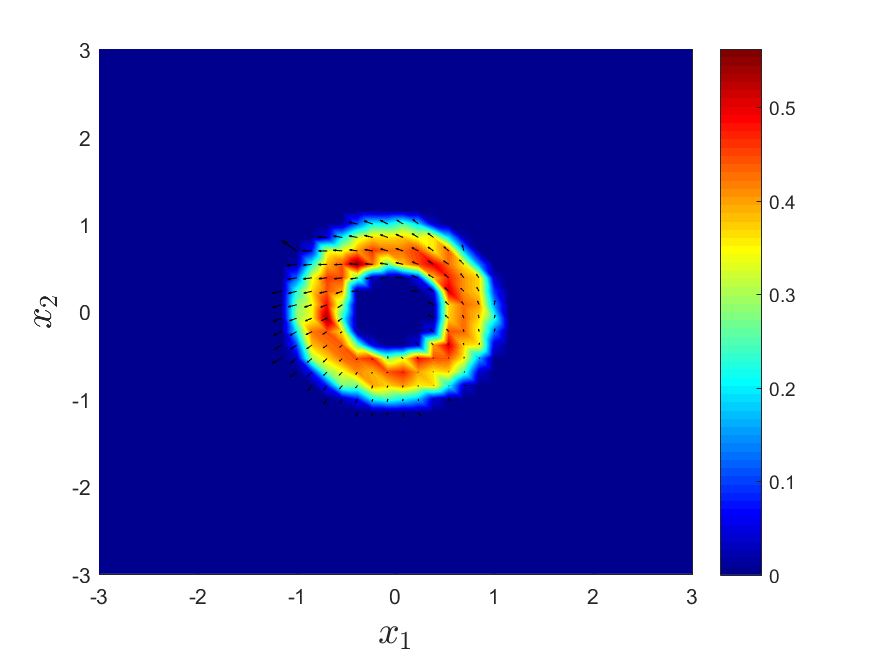}}
\subfigure[$t=1.5$]{\includegraphics[scale=0.45]{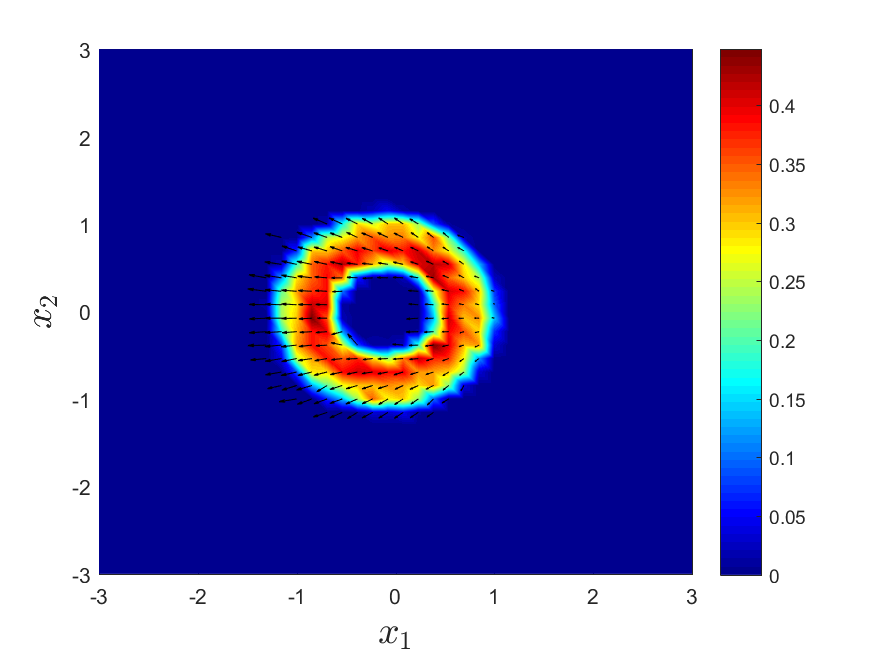}}\\
\subfigure[$t=2.5$]{\includegraphics[scale=0.45]{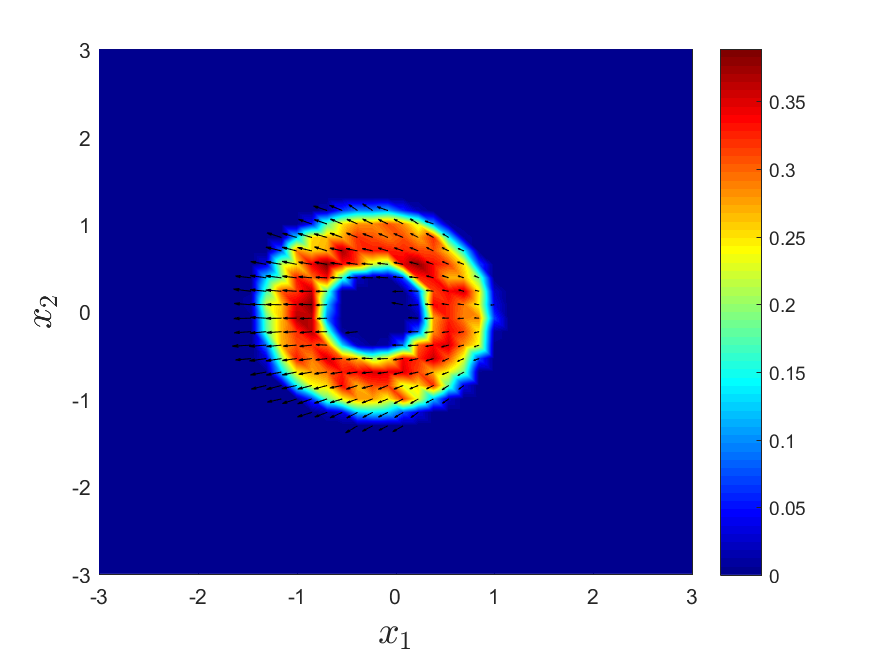}}
\subfigure[$t=5$]{\includegraphics[scale=0.45]{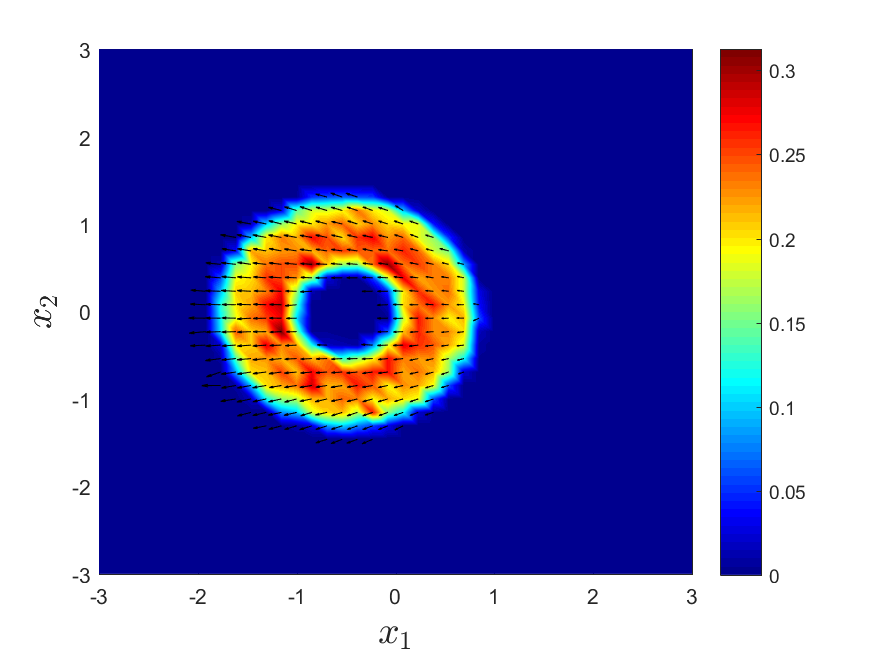}}
\caption{Evolution up to $t=5$ with $\Delta t = 10^{-2}$ of the expected density from the D'Orsogna-Bertozzi et al. model with alignment. We consider bivariate stochastic attraction repulsion strength, $C_A(\theta_2)=30+\theta_2$, $C_R(\theta_2)=10+\theta_2$, $\theta_2\sim \mathcal U([-1,1])$, and stochastic alignment dynamics $\gamma(\theta_1)=0.1+0.05\theta_1$ with  $(\theta_1,\theta_2)\sim g_1(\theta_1)g_2(\theta_2)$. Here we take $N=10^5$ agents, $S=5$ and $M=5$. }
\label{fig:CSDB}
\end{figure}

Within this section we investigate the case of the full particle system \eqref{eq:system}, where the dynamics of self-propulsion, attraction and repulsion are given by the D'Orsogna-Bertozzi et al. model whereas the alignment dynamics is given by the Cucker-Smale model. In particular, we concentrate on the case where the evolution of the system is affected by an uncorrelated 2D random term $\theta=(\theta_1,\theta_2)$, i.e. $\theta\sim g(\theta_1,\theta_2)=g_1(\theta_1)g_2(\theta_2)$. At the kinetic level the model is described by the evolution of the density function $f=f(\theta_1,\theta_2,x,v,t)$ solution of the following kinetic equation
\be\label{eq:CSDB}
\partial_t f + v\cdot \nabla_x f = \left[ (\nabla_x U* \rho )\cdot \nabla_v f + \nabla_v \cdot [\mathcal H[f]-((\alpha-\beta|v|^2)v)f] \right],
\ee
where the alignment term is given by the nonlocal operator
\[
\mathcal H[f](\theta_1,\theta_2,x,v,t) = \int_{\RR^2}\int_{\RR^2} \dfrac{K}{(1+|x-y|^2)^{\gamma(\theta_1)}}(v-w)f(\theta_1,\theta_2,x,v,t)dv\,dx,
\]
and the potential $U(\theta_2;|x-y|)$ depends only on $\theta_2$ as defined in \eqref{eq:U_Morse}. In order to apply the MCgPC method, we consider the 2D stochastic Galerkin decomposition   
\[
x^M (\theta_1,\theta_2,t) = \sum_{k,h=0}^M \hat x_{i,kh}\Phi_k(\theta_1)\Psi_h(\theta_2),\qquad v^M (\theta_1,\theta_2,t) = \sum_{k,h=0}^M \hat v_{i,kh}\Phi_k(\theta_1)\Psi_h(\theta_2),
\]
being $\{\Phi_k(\theta_1)\}_{k=0}^M$ and $\{\Psi_h(\theta_2)\}_{h=0}^M$ the orthogonal basis of the introduced random terms. The projection of the particle system is given for all $\ell,r=0,\dots,M$ by 
\[
\begin{cases}\vspace{1em}
\dfrac{d}{dt}{\hat x}_{i,\ell r} = \hat v_{i,\ell r},\\
\dfrac{d}{dt}{\hat v}_{i,\ell r} = \displaystyle\sum_{k,h=0}^M S^i_{\ell r k h} \hat{v}_{i,kh} + \dfrac{1}{N} \sum_{j=1}^N \sum_{k,h=0}^M E^{ij}_{\ell r k h} (\hat v_{j,kh}-\hat v_{i,kh})-\dfrac{1}{N} \sum_{j\ne i} B^{ij}_{\ell r},
\end{cases}
\]
being $S^i_{\ell r k h}$, $E_{\ell r k h}^{ij}$, $B_{\ell r }^{ij}$ defined as follows
\[
\begin{split}
S^i_{\ell r k h} &= \iint_{I_{\Theta_1}\times I_{\Theta_2}}(a-b|v_i^M|^2)\Phi_\ell(\theta_1)\Psi_r(\theta_2) \Phi_k(\theta_1)\Psi_h(\theta_2) dg_1(\theta_1)dg_2(\theta_2),\\
E^{ij}_{\ell r k h} &= \iint_{I_{\Theta_1}\times I_{\Theta_2}} H(\theta_1;|x_i^M-x_j^M|) \Phi_\ell(\theta_1)\Psi_r(\theta_2) \Phi_k(\theta_1)\Psi_h(\theta_2) dg_1(\theta_1)dg_2(\theta_2)\\
B^{ij}_{\ell r} &= \iint_{I_{\Theta_1}\times I_{\Theta_2}} \nabla_x U(\theta_2;|x_i^M-x_j^M|) \Phi_\ell(\theta_1)\Psi_r(\theta_2) dg_1(\theta_1)dg_2(\theta_2),
\end{split}
\]
with $U(\cdot)$ the Morse potential defined in \eqref{eq:U_Morse}. In Figure \ref{fig:CSDB}, we present the MCgPC solution in case of stochastic attraction repulsion strengths: $C_A(\theta_2)=30+\theta_2$, $C_R(\theta_2)=10+\theta_2$, $\theta_2\sim \mathcal U([-1,1])$, and alignment dynamics \eqref{eq:HCS} with parameters: $K=5.0$, $\gamma(\theta_1)=0.1+0.05\theta_1$,  $\theta_1\sim \mathcal U([-1,1])$. The initial data is the same as in the previous two examples. The computational cost of the method now is $O(NSM^2)$, therefore we considered $S=5$ and $M=4$ to obtain a similar cost of the case presented in Figure \ref{fig:mill}. 

 \section*{Conclusion}
In this paper we develop a novel approach for the construction of nonnegative gPC approximations of mean-field equations. The method is based on a Monte Carlo approximation of the kinetic mean-field equation in phase space combined with a gPC approximation of the random inputs on the particle samples. The idea presented here in principle admits several generalizations to other kinetic equations like Vlasov-Fokker-Planck equations and Boltzmann equations. 
These aspects will be the subject of future investigations and will be presented elsewhere.     
 
\section*{Acknowledgments}
JAC was supported by the Engineering and Physical Sciences Research Council (EPSRC) under grant no. EP/P031587/1. MZ acknowledges Imperial College (London, UK) and Institut Mittag-Leffler (Stockholm, Sweden) for the kind hospitality and the support from ``Compagnia di San Paolo'', Torino, Italy. LP and MZ acknowledge partial support by the research grant \emph{Numerical methods for uncertainty quantification in hyperbolic and kinetic equations} of the group GNCS of INdAM, Italy. 


\end{document}